\title{\bf \boldmath }
\newcommand{\ds}{\displaystyle}
\newtheorem{theorem}{Theorem}[section]
\newtheorem{defin}[theorem]{Definition}
\newtheorem{lemma}[theorem]{Lemma}
\newtheorem{example}[theorem]{Example}
\newtheorem{prop}[theorem]{Proposition}
\newtheorem{cor}[theorem]{Corollary}
\newcommand{\Aut}{{\rm Aut}}
\newcommand{\vh}{\vskip 0.5cm \noindent }
\newcommand{\vq}{\vskip 0.25cm \noindent }
\newcommand{\Hom}{{\rm Hom}}
\newcommand{\Tor}{{\rm Tor}}
\newcommand{\N}{{\mathbb{N}}}
\newcommand{\Z}{{\mathbb{Z}}}
\newcommand{\bi}{\begin{itemize}}
\newcommand{\ei}{\end{itemize}}
\newcommand{\mapright}[1]{{\smash{\mathop{\longrightarrow}\limits^{#1}}}}
\newcommand{\rmapdown}[1]{{\Big\downarrow\rlap{$\vcenter{\hbox{$\scriptstyle#1$}}$}}}
\newcommand{\lmapdown}[1]{{\vcenter{\hbox{$\scriptstyle#1$}}\Big\downarrow
               \vcenter{\hbox{$\phantom{\scriptstyle#1}$}}}}
\newcommand{\ba}{\begin{array}}
\newcommand{\ea}{\end{array}}
\newcommand{\comdiag}[1]{\begin{matrix}#1\end{matrix}}
\newenvironment{proof}{\noindent{\bf Proof.}}{\quad\medskip}
\begin{document}

\title{ The nilpotent genus   of  finitely generated residually nilpotent  groups.}
\author{ Niamh O'Sullivan}
\date{}
\maketitle
\begin{abstract} \noindent  If $G$ and $H$   are finitely generated residually nilpotent   groups, then $G$ and $H$ are in the same nilpotent genus if they have the same lower central
   quotients (up to isomorphism).  A stronger condition is that $H$ is para-$G$ if there exists a monomorphism of $G$ into $H$ which induces isomorphisms between the corresponding quotients of their lower central series. We first consider residually nilpotent groups and find sufficient conditions on the monomorphism so that $H$ is para-$G.$  We then  prove that for certain polycyclic groups, if $H$ is para-$G$, then  $G$ and $H$ have the same Hirsch length.  We  also prove  that the pro-nilpotent completions  of these polycyclic groups are   locally polycyclic. 
\end{abstract}
\begin{keywords} Polycyclic, Residually nilpotent, Nilpotent genus, Pronilpotent completions.


\vskip 0.5cm \noindent  {\it 2020 Mathematics Subject Classification.}  20F14,  20F16 
 
\end{keywords}

\section{Introduction} 

\par Throughout this paper the groups we study will be  finitely generated residually nilpotent   groups. If $G$ and $H$   are two such groups, then $G$ and $H$ are in the same nilpotent genus if they have the same lower central
   quotients (up to isomorphism). Hanna Neumann asked whether free groups can be characterised in terms of their lower central series and as a result Baumslag \cite{baum}, Bridson and Reid \cite{br}, among others, looked at parafree groups: residually nilpotent groups in the same nilpotent genus as a free group.   If $H$ is a residually finite group and  $\phi : G\rightarrow  H$ is the inclusion of a subgroup 
	$G$, then
$(H, G)_{\phi}$ is called a Grothendieck Pair if the induced homomorphism $\hat \phi : \hat G \rightarrow \hat H$ is an isomorphism 
but $\phi$ is not, where $\hat H$ denotes the profinite completion of $H$. Bridson and Reid showed, Prop. 3.4. \cite{br},  that if $(G, H)_{\phi}$ is a Grothendieck
pair of residually nilpotent groups, then $\phi$ induces isomorphisms on the lower central quotients.    In \cite{bmo1},\cite{bmo2},  Baumslag, Mikhailov and Orr studied the nilpotent genus of finitely generated residually nilpotent metabelian groups and the motivation for this work was to extend some of their results. 
 \begin{defin} The nilpotent genus of $G$  is the set of all isomorphism classes of groups $H$ such that $G/\gamma_i (G)\cong H/\gamma_i (H),$ for all $i\in \N.$
\vq  The group $H$ is para-$G$ if there exists a monomorphism $\phi :G\rightarrow H$ which induces isomorphisms between the corresponding quotients of their lower central series: $$\phi_i :G/\gamma_i (G) \mapright{\cong} H/\gamma_i (H),$$  we say that $H$ is para-$G$ via $\phi.$ \end{defin}	 Given any set of primes $\pi$, let $\pi'$ denote its complement in the set of all primes. If $n\in \N,$ denote by  $\pi(n)$   the set of primes dividing $n$. We say that $n$ is a $\pi$-number if $\pi(n)\subseteq \pi$ and  a $\pi'$-number if $\pi(n)\subseteq \pi'.$ If   $K\le G$, then the $\pi$-isolator of $K$ in $G$ denoted $I_{\pi} (K)$ is $$I_{\pi} (K)= < x\in G\mid x^n\in  K, \,  n\;{\rm is\; a}\;\pi-{\rm no}\,>.$$ If $K\triangleleft G$ and $G/K$ is a finitely generated nilpotent group, then $$ I_{\pi} (K)=\{  x\in G\mid x^n\in  K, \,  n\;{\rm is\; a}\;\pi-{\rm no}\,\} \triangleleft G$$ and $I_{\pi} (K)/K$ is a finite $\pi-$group.  Let $$\tau (G)= \ds \bigcup_{i=1}^\infty \pi (|\Tor(\gamma_i (G)/\gamma_{i+1} (G))|).$$	
\begin{defin} Let $\tau=\tau (G).$  Suppose that  $\phi:G\rightarrow H$ be  a monomorphism that  induces an isomorphism $G_{ab}\cong H_{ab}$ and    there exists   $\tau'$-numbers $n_{i}$ such that   $\gamma_i (H)^{n_i}\le \phi(\gamma_i (G))$, for all $i\ge 2 .$  Then we call 
$\phi$ a lower central  $\tau$-monomorphism.  \end{defin}
 We shall prove that  the existence of such a monomorphism is sufficient for $H$ to be para-$G.$
 \vh {\bf Theorem 2.2.} {\it   Let $G,\; H $ be finitely generated residually nilpotent groups  and set  $\tau=\tau (G)$. Suppose that     $\phi: G\rightarrow  H$  is  a lower central  $\tau$-monomorphism. Then $H$ is para-$G$ via $\phi$ and $\tau (H)=\tau$. }

  In   \cite{bmo2},  Baumslag, Mikhailov and Orr  found examples of groups $G$ such that the nilpotent genus of $G$ is nontrivial.  We show that  the nilpotent genus of   $G$  is nontrivial if the nilpotent genus of a certain type of quotient is nontrivial. 
  \vh {\bf Proposition 2.6.} {\it  Let $G,\; H $ be finitely generated residually nilpotent groups  where  $\tau=\tau (G)$ is finite.  Suppose that     $\phi: G\rightarrow  H$  is a  lower central  $\tau$-monomorphism. Let    $N=I_{\pi} ( \gamma_k (H))$   and $M=I_{\pi}(\gamma_k (G))$   where $\pi\subseteq \tau$.  If $\tau (G/Z_j(M)) \subseteq \tau ,$ for some $j\ge 1   ,$ then    $H/Z_j(N)$ is para $G/Z_j(M).$  Furthermore  if     $G/Z_j(M) \not\cong H/Z_j(N)  ,$ then $ G\not  \cong H.$}
  
  \vh  In \S 3 and \S 4   we will study  the class ${\cal P}^*$ ( ${\cal P}_c^*$) of polycyclic groups which are residually nilpotent and nilpotent (of class $c$)  by abelian. Baumslag, Mikhailov and Orr \cite{bmo2} proved that if  $G,H  $ are finitely generated residually nilpotent metabelian groups, then $H$ is para-$G$, if, and only if, $G$ is para-$H$ so that this is an equivalence relation.   Unfortunately we do not have as strong a result  when $G$ and $H$ are polycyclic of derived length greater than $2$, however we can use their work to show that, for certain polycyclic groups, if $H$ is para-$G$, then $h(G)=h(H),$  which is  a  necessary condition for $G$ to be also para-$H$. 
 \vh {\bf Theorem 3.4.} {\it    Let $G,H\in {\cal P}_c^*$ where $H$ is para-$G$ via $\phi$ and one of the following holds: \bi \item[(i)] $    G/G'',\;  H/H''$ are residually nilpotent;
\item[(ii)]  $|Z_{c-1} (G'): G''|,\; | Z_{c-1} (H'): H''|<\infty .$ \ei Then $h(H)=h(G)$.} 
\vh  
      In \cite{bmo2}  Baumslag, Mikhailov and Orr proved that the pro-nilpotent completion of a  ${\cal P}_1^*$  group is locally polycyclic, in \S 4  we extend their result to   ${\cal P}^*.$	
      \vq {\bf Theorem 4.4.} {\it  If     $G\in {\cal P}^*$, then $\hat G_{nil}$ is locally polycyclic.}

\section{The nilpotent genus of residually nilpotent groups}

	When considering the nilpotent genus, the lower central nilpotent quotients of the groups play a significant role.  In later  sections,  we'll also  consider polycyclic groups where the derived subgroup is a finitely generated nilpotent group.  If $G$ is any group with operator domain $\Omega$, then  $\gamma_{i} (G)/\gamma_{i+1} (G)$ is a right $\Omega$-module  and the mapping $$a\gamma_{i+1}(G)\otimes_{\Z} gG'\mapsto [a,g]\gamma_{i+2} (G)$$ is a well-defined epimorphism from $\gamma_i(G)/\gamma_{i+1} (G)\otimes_{\Z} G_{ab} $ to $\gamma_{i+1}(G)/\gamma_{i+2}(G)$ so that $G_{ab}$ greatly influences the structure  of $G/\gamma_{i}(G)$, for all $i$,  (see Robinson p. 131 \cite{rob}). Some of the results that we use (which can be easily proved using induction and most likely  appear elsewhere) are included in   the following Lemma:
	\begin{lemma}	Let $G$ be a  group and let $N\le G.$   \bi \item[(i)]  If      $NG'=G,$ then   the inclusion $N \hookrightarrow G$  induces  epimorphisms $$N/\gamma_i (N)\twoheadrightarrow G/\gamma_i (G);$$
		\item[(ii)]  If  $G$ is nilpotent and     $G^m \le NG',$  for some $m,$ then  $\gamma_i( G)^{n_i} \le \gamma_i (N)$ and $G^n\le N$, for some $\pi(m)$-numbers $n_i,\,n;$   
		\item[(iii)]  If  $G$ is nilpotent of class $c$ and $\alpha\le \Aut (G)$ is abelian where the automorphism induced by  $\alpha $   on $G_{ab}$ is the identity,  then the automorphism induced by  $\alpha $   on $ \gamma_i(G)/\gamma_{i+1} (G)$  is the identity  and $G\rtimes <t>$   is nilpotent of class $c,$  where $t$ acts on $G$ by $\alpha$; 
	\item[(iv)]  If $G$ is a polycyclic group, then there exists $k$ such that $|\gamma_k (G)/\gamma_{k+j} (G)|<\infty ,$ for all $j \ge 1,$  $\pi(|\gamma_k (G)/\gamma_{k+j} (G)|)\subseteq \pi( |\gamma_k (G)/\gamma_{k+1} (G)|)$ and $$\tau (G)=\bigcup_{i=1}^k \pi (|\Tor(\gamma_i (G)/\gamma_{i+1} (G))|) $$  is a finite set of primes. 	\ei 
	\end{lemma}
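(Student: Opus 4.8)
The four parts are essentially independent inductions, and the tensor epimorphism $\gamma_i(G)/\gamma_{i+1}(G)\otimes_{\Z}G_{ab}\twoheadrightarrow\gamma_{i+1}(G)/\gamma_{i+2}(G)$ recorded above is the common engine. For \textbf{(i)} the plan is to show $N\gamma_i(G)=G$ for every $i$. The hypothesis $NG'=G$ says that $N$ surjects onto $G_{ab}$, so the image of $N$ in the nilpotent group $Q_i:=G/\gamma_i(G)$ supplements its derived subgroup $\gamma_2(Q_i)$; since in any nilpotent group a subgroup supplementing the derived subgroup is the whole group (iterate $H\gamma_2=G\Rightarrow H\gamma_{j}=H\gamma_{j+1}$ up to $\gamma_{c+1}=1$), the image of $N$ is all of $Q_i$, i.e. $N\gamma_i(G)=G$. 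As $\gamma_i(N)\le\gamma_i(G)$, the inclusion $N\hookrightarrow G$ then factors through $N/\gamma_i(N)$ and is onto $G/\gamma_i(G)$, giving the required epimorphism.

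For \textbf{(ii)} the first step is a bilinearity computation: since $[x^a,g]\equiv[x,g]^a$ and $[x,g^b]\equiv[x,g]^b$ modulo $\gamma_{i+2}(G)$ for $x\in\gamma_i(G),\ g\in G$, an induction on $i$ upgrades $G^m\le NG'$ to $\gamma_i(G)^{a_i}\le\gamma_i(N)\gamma_{i+1}(G)$ for suitable $\pi(m)$-numbers $a_i$. Because $\gamma_{c+1}(G)=1$, the top case already gives $\gamma_c(G)^{a_c}\le\gamma_c(N)$. To remove the tail $\gamma_{i+1}(G)$ in general I would run a descending induction: writing $x^{a_i}=\nu w$ with $\nu\in\gamma_i(N)$ and $w\in\gamma_{i+1}(G)$ and raising to a power, the commutator corrections in $(\nu w)^k$ lie in $[\gamma_i(G),\gamma_{i+1}(G)]\le\gamma_{i+2}(G)$, so each descent step replaces the $\gamma_{i+1}(G)$-tail by a $\gamma_{i+2}(G)$-tail at the cost of multiplying the exponent by a $\pi(m)$-number; after finitely many steps the tail reaches $\gamma_{c+1}(G)=1$ and yields $\gamma_i(G)^{n_i}\le\gamma_i(N)$, the case $i=1$ being $G^n\le N$. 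I expect this to be the main obstacle: away from the centre the corrections no longer vanish, so keeping track of which $\pi(m)$-power suffices through these non-commuting terms is the delicate bookkeeping (most cleanly organised by the descent above, or equivalently by inducting on the class via $G/\gamma_c(G)$ and using that $\gamma_c(G)$ is central so that $(\nu z)^k=\nu^k z^k$).

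For \textbf{(iii)} I would induct on $i$ using the tensor epimorphism, which is natural in $G$: each automorphism in $\alpha$ acts as the identity on $\gamma_i(G)/\gamma_{i+1}(G)$ and on $G_{ab}$, hence as the identity on the tensor product, and therefore on its image $\gamma_{i+1}(G)/\gamma_{i+2}(G)$; this is exactly the assertion $[\gamma_i(G),\alpha]\le\gamma_{i+1}(G)$. For the semidirect product $\Gamma=G\rtimes\langle t\rangle$ I would then show by induction that $\gamma_j(\Gamma)=\gamma_j(G)$ for all $j\ge2$: adjoining $t$ contributes no new commutators because $\alpha$ is abelian, while $[\gamma_i(G),t]\le\gamma_{i+1}(G)$ by the first half, so the lower central terms are not enlarged. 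Consequently $\gamma_{c+1}(\Gamma)=\gamma_{c+1}(G)=1$ while $\gamma_c(\Gamma)\supseteq\gamma_c(G)\ne1$, pinning the class at exactly $c$.

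For \textbf{(iv)} I would use that a polycyclic $G$ has finite Hirsch length $h(G)$ and finitely generated abelian factors $\gamma_i(G)/\gamma_{i+1}(G)$. Additivity of $h$ over the series $G/\gamma_n(G)$ forces $\sum_i\mathrm{rk}\!\left(\gamma_i(G)/\gamma_{i+1}(G)\right)\le h(G)$, so the torsion-free ranks vanish for $i\ge k_0$ and every factor from $k_0$ on is finite; choosing $k\ge k_0$ makes each $\gamma_k(G)/\gamma_{k+j}(G)$ a finite tower of finite factors, hence finite. For the prime containment I would feed a finite factor into the tensor epimorphism: for finite $T$ and any abelian $A$ one has $\pi(|T\otimes_{\Z}A|)\subseteq\pi(|T|)$ (since $T\otimes_{\Z}A$ is annihilated by $\exp T$), so $\pi(|\gamma_{i+1}(G)/\gamma_{i+2}(G)|)\subseteq\pi(|\gamma_i(G)/\gamma_{i+1}(G)|)$ for $i\ge k$, and iterating gives $\pi(|\gamma_k(G)/\gamma_{k+j}(G)|)\subseteq\pi(|\gamma_k(G)/\gamma_{k+1}(G)|)$. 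Finally, for $i>k$ the factor $\gamma_i(G)/\gamma_{i+1}(G)$ is finite, so its torsion subgroup is all of it and its primes already lie in $\pi(|\Tor(\gamma_k(G)/\gamma_{k+1}(G))|)$; hence the union defining $\tau(G)$ stabilises at $k$ and, being a finite union of finite prime sets, is finite.
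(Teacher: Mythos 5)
Your proof is correct and takes essentially the same route as the paper: (i) is the standard supplement argument (the paper simply cites Hall, Cor.~10.3.3), (iii) and (iv) are the same inductions driven by the tensor epimorphism $\gamma_i(G)/\gamma_{i+1}(G)\otimes_{\Z}G_{ab}\twoheadrightarrow\gamma_{i+1}(G)/\gamma_{i+2}(G)$ and the stabilisation of Hirsch length. The only organisational difference is in (ii), where the paper inducts on the nilpotency class by passing to $G/\gamma_k(G)$ and exploiting the centrality of the top term, whereas you first prove the containment modulo $\gamma_{i+1}(G)$ and then strip the tail by descent --- but, as you yourself observe, these two bookkeeping schemes are equivalent, and your exponent tracking ($\pi(m)$-numbers throughout) is sound.
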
 \begin{proof} (i) See Corollary 10.3.3. p. 155 \cite{hall}.
		\vq (ii) This is true if $G$ is abelian. Let $G$ be nilpotent of class $k$ so that $\gamma_k(G)\le Z(G)$ and $[x^l,y]=[x,y^l]=[x,y]^l, $ for all $x\in \gamma_{k-1}(G),\,y\in G,\,l\in \Z.$   Assume that the result  is true for  nilpotent  groups of class $k-1$. Then there exists $\pi (m)-$numbers $  m_{i },\,l$ such that  $$\gamma_{i } (G)^{m_{i }}\le \gamma_{i } (N)\gamma_k (G),\;\;G^l\le N \gamma_{k}(G),\;\; \forall \;1\le i\le k-1.$$ Therefore  
		  $$\ba{l} \ds \gamma_k(G)^{m_{k-1}l}\le  [\gamma_{k-1} (G)^{m_{k-1}}, G^l] \le [\gamma_{k-1}(N)\gamma_k (G) , N]\le  \gamma_{k}(N) \\ \\ \Longrightarrow  \ds   \gamma_i( G)^{n_i} \le \gamma_i (N),\;\;G^n\le N\ea $$ where  $n_k=m_{k-1}l$,   $n_i=m_i n_k,$ for all $i<k$, and $n=n_{k}l.$
\vq (iii) It is very straightforward to show this by induction.
\vq (iv) Since $G$ is a polycyclic group, there exists $k$ such that $h(\gamma_k (G))=h(\gamma_j (G)),$ for all $j\ge k.$ Let $n=|\gamma_k(G):\gamma_{k+1} (G)|.$ Then  	$\gamma_{k+j-1} (G)/\gamma_{k+j} (G)  $  has exponent dividing $n$, for all $j\ge 1$, and $\tau(G)$ is a finite set of primes.	$\square$ 	\end{proof}

\begin{theorem} Let $G,\; H $ be finitely generated residually nilpotent groups  and set  $\tau=\tau (G)$. Suppose that     $\phi: G\rightarrow  H$  is  a lower central  $\tau$-monomorphism. Then $H$ is para-$G$ via $\phi$ and $\tau (H)=\tau$.       
\end{theorem}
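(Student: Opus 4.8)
The plan is to identify $G$ with its image $\phi(G)\le H$, so that $\phi$ becomes the inclusion, and to reduce the assertion that $H$ is para-$G$ to showing that the induced maps on the lower central factors are isomorphisms. Since $\phi$ induces an isomorphism $G_{ab}\cong H_{ab}$, the image of $G/G'\to H/H'$ is $GH'/H'$, so $GH'=H$; Lemma 2.1(i) then yields epimorphisms $\phi_i\colon G/\gamma_i(G)\twoheadrightarrow H/\gamma_i(H)$ for every $i$, and in particular the induced maps on the factors $L_i\colon \gamma_i(G)/\gamma_{i+1}(G)\twoheadrightarrow \gamma_i(H)/\gamma_{i+1}(H)$ are surjective. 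These factors are finitely generated abelian groups, since $G$ and $H$ are finitely generated. I would then induct on $i$: from the short exact sequences $1\to \gamma_i(G)/\gamma_{i+1}(G)\to G/\gamma_{i+1}(G)\to G/\gamma_i(G)\to 1$ and its analogue for $H$, the five lemma shows that if $\phi_i$ and $L_i$ are isomorphisms then $\phi_{i+1}$ is an isomorphism, with base case the given isomorphism $\phi_2=\phi_{ab}$. Everything thus reduces to proving that each surjection $L_i$ is injective.

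For the injectivity of $L_i$, note that $\Ker L_i=\big(\gamma_i(G)\cap\gamma_{i+1}(H)\big)/\gamma_{i+1}(G)$, a subgroup of the finitely generated abelian group $\gamma_i(G)/\gamma_{i+1}(G)$. Here is where I bring in the power hypothesis: if $x\in\gamma_i(G)\cap\gamma_{i+1}(H)$, then $x^{n_{i+1}}\in\gamma_{i+1}(H)^{n_{i+1}}\le\gamma_{i+1}(G)$, so the image of $x$ in $\gamma_i(G)/\gamma_{i+1}(G)$ is annihilated by the $\tau'$-number $n_{i+1}$. Hence $\Ker L_i$ has exponent dividing $n_{i+1}$; being a bounded subgroup of a finitely generated abelian group it is finite, so it lies in the torsion subgroup $\Tor(\gamma_i(G)/\gamma_{i+1}(G))$, and, being abelian of exponent dividing $n_{i+1}$, its order is a $\tau'$-number. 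But by the very definition of $\tau=\tau(G)$ the order of $\Tor(\gamma_i(G)/\gamma_{i+1}(G))$ is a $\tau$-number, so $|\Ker L_i|$ is simultaneously a $\tau$-number and a $\tau'$-number; as $\tau\cap\tau'=\emptyset$ this forces $|\Ker L_i|=1$. Therefore $L_i$ is an isomorphism for every $i$.

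I expect this torsion step to be the only real obstacle, and the subtlety is purely bookkeeping: one must place $\Ker L_i$ inside the torsion of the $G$-factor, so that $\tau(G)$ controls its primes, while the numbers $n_{i+1}$ control its exponent, after which the clash $\tau\cap\tau'=\emptyset$ does the work. With all $L_i$ isomorphisms, the induction of the first paragraph gives $\phi_i\colon G/\gamma_i(G)\mapright{\cong}H/\gamma_i(H)$ for every $i$, so $H$ is para-$G$ via $\phi$. Finally, each isomorphism $L_i$ restricts to an isomorphism $\Tor(\gamma_i(G)/\gamma_{i+1}(G))\cong\Tor(\gamma_i(H)/\gamma_{i+1}(H))$, so the two families of torsion factors share the same prime divisors index by index; taking the union over $i$ yields $\tau(H)=\tau(G)=\tau$.
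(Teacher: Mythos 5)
Your proof is correct and is essentially the paper's argument: surjectivity of the maps $\phi_i$ comes from Lemma 2.1(i) via the abelianization isomorphism, and injectivity comes from the clash between the $\tau'$-exponent forced by the powers $n_i$ and the $\tau$-torsion recorded in $\tau(G)$. The only difference is organizational: the paper kills the kernel directly in $G/\gamma_i(G)$ (from $\phi(g)\in\gamma_i(H)$ it gets $g^{n_i}\in\gamma_i(G)$ and concludes $g\in\gamma_i(G)$ because the torsion subgroup of the finitely generated nilpotent group $G/\gamma_i(G)$ is a finite $\tau$-group), whereas you run the same clash factor-by-factor on $\gamma_i(G)/\gamma_{i+1}(G)$ and reassemble with the five lemma --- both are sound, and your version has the small merit of making explicit the torsion fact that the paper leaves implicit.
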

\begin{proof} By the hypothesis,  $\phi$ induces an isomorphism $G_{ab}\cong H_{ab}$  and therefore  induces  epimorphisms $\phi_i: G/\gamma_{i} (G)\rightarrow H/\gamma_i (H),$ by Lemma 2.1 (i).   Suppose that $g\in G$ and $ \phi(g)\in \gamma_i (H),$  then  $ g^{n_i}\in \gamma_{i} (G) ,$ where  $n_{i}$ is a   $\tau'$-number,   thus   $ g\in \gamma_i (G)$  and  $\phi_i$ is an isomorphism for all $i.$ Therefore $H$ is para-$G$ via $\phi$ and $\tau (H)=\tau $. 
$\square$ \end{proof}
\vq The   conditions in Theorem 2.2 can be simplified  when $H'$ is nilpotent of class $\le 2.$
\begin{cor}   Let $G,\; H $ be finitely generated residually nilpotent groups  where $H'$ is nilpotent of class at most $2$    and set  $\tau=\tau (G)$. Suppose that   a  monomorphism  $\phi: G\rightarrow  H$ induces an isomorphism $G_{ab}\cong H_{ab}$ and   $\gamma_2 (H)^{n }\le \phi(\gamma_2 (G))$, for some $n\in \tau' .$   Then $H$ is para-$G$ via $\phi.$  
\end{cor}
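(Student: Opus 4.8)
The plan is to deduce the statement from Theorem 2.2 by showing that, under the weaker hypotheses given here, $\phi$ is automatically a lower central $\tau$-monomorphism; that is, I will produce $\tau'$-numbers $n_i$ with $\gamma_i(H)^{n_i}\le\phi(\gamma_i(G))$ for every $i\ge 2$. The case $i=2$ is the hypothesis, and the work is to propagate it up the lower central series. Throughout I write $D=[H',H']=\gamma_2(H')$; the assumption that $H'$ is nilpotent of class at most $2$ says precisely that $\gamma_3(H')=1$, so $D$ is abelian and central in $H'$, and being characteristic in $H'\triangleleft H$ it is normal in $H$. Note also that $\gamma_i(H)\le H'$ for all $i\ge2$ and, since $\phi$ induces an isomorphism $G_{ab}\cong H_{ab}$, that $H=\phi(G)\gamma_2(H)$.

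First I would establish the statement modulo $D$. In $\bar H=H/D$ the derived group $\bar H'=H'/D$ is abelian, so the commutator identities collapse: for $\bar x\in\gamma_i(\bar H)\le\bar H'$ and $\bar h'\in\bar H'$ one has $[\bar x,\bar h']=1$ and $[\bar x,\overline{\phi(g)}]^{\bar h'}=[\bar x,\overline{\phi(g)}]$, while $[\bar x^{\,n},\overline{\phi(g)}]=[\bar x,\overline{\phi(g)}]^{n}$ holds exactly. Using $H=\phi(G)\gamma_2(H)$ and the commutator epimorphism recalled before Lemma 2.1, a routine induction carried out with the \emph{single} exponent $n$ then gives $\gamma_i(\bar H)^{\,n}\le\overline{\phi(\gamma_i(G))}$ for all $i\ge2$; passing from generators to the whole subgroup is legitimate here because each $\gamma_i(\bar H)$ lies in the abelian group $\bar H'$. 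Lifting, this reads
$$\gamma_i(H)^{\,n}\le\phi(\gamma_i(G))\,D\qquad(i\ge2).$$
The heart of the argument is then a \emph{piece lemma} that removes $D$: if $P\in\gamma_p(H)$ and $Q\in\gamma_q(H)$ with $p,q\ge2$, then $P,Q\in H'$ and $H'$ has class at most $2$, so $[P,Q]^{n^2}=[P^{\,n},Q^{\,n}]$; writing $P^n=\phi(P_0)d_P$ and $Q^n=\phi(Q_0)d_Q$ from the displayed containment, with $P_0\in\gamma_p(G)$, $Q_0\in\gamma_q(G)$ and $d_P,d_Q\in D$, the centrality of $D$ in $H'$ makes the slack disappear, $[P^{\,n},Q^{\,n}]=[\phi(P_0),\phi(Q_0)]=\phi([P_0,Q_0])$, whence $[P,Q]^{n^2}\in\phi(\gamma_{p+q}(G))$.

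With these two ingredients the induction closes. Assuming $\gamma_j(H)^{\,n_j}\le\phi(\gamma_j(G))$ for all $j\le i$, I expand a typical generator $[x,y]$ of $\gamma_{i+1}(H)$ (with $x\in\gamma_i(H)$, $y=\phi(g)h'$) and collect the error: using $x^{n_i}=\phi(a)$ with $a\in\gamma_i(G)$ one finds $[x,y]^{n_i}=\phi([a,g])\,d$, where $d\in D\cap\gamma_{i+2}(H)$ is a product of commutators $[P,Q]$ each with lower central weights $p,q\ge2$ and $p+q\ge i+2$. Raising to $n^2$ and applying the piece lemma to every factor of $d$ (each $[P,Q]^{n^2}\in\phi(\gamma_{p+q}(G))\le\phi(\gamma_{i+1}(G))$), together with the centrality of $D$, which lets $\phi([a,g])^{n^2}$ and $d^{n^2}$ separate, yields $[x,y]^{\,n_i n^2}\in\phi(\gamma_{i+1}(G))$. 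A final arithmetic adjustment of the exponent to a $\tau'$-number $n_{i+1}$ (absorbing the class-$2$ correction $[t,s]^{\binom{N}{2}}$ that arises in passing from $N$-th powers of generators to the subgroup $\gamma_{i+1}(H)^{n_{i+1}}$, using the piece lemma once more on $[t,s]\in D$) completes the step. Once $\gamma_i(H)^{n_i}\le\phi(\gamma_i(G))$ holds for all $i$, $\phi$ is a lower central $\tau$-monomorphism and Theorem 2.2 gives that $H$ is para-$G$ via $\phi$. I expect the main obstacle to be bookkeeping rather than conceptual: the genuine subtlety is that the $D$-valued errors have weight $i+2$ or more, and some of their factors involve elements of $\gamma_{i+1}(H)$ for which $\gamma_{i+1}(H)^{n_{i+1}}\le\phi(\gamma_{i+1}(G))$ is not yet known, so it is essential that the displayed containment modulo $D$ is proved for \emph{all} weights simultaneously and that the piece lemma invokes only that containment and never the conclusion being proved; the class-$2$ hypothesis is exactly what forces every such discrepancy into the central subgroup $D$, where it can be dissolved in this way.
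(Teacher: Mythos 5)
Your proposal is correct, and it reaches the same destination as the paper --- verifying that $\phi$ satisfies the lower central $\tau$-monomorphism condition and then invoking Theorem 2.2 --- but it organises the induction genuinely differently. The paper works directly inside $H$: it first treats the case $H'$ abelian, where the single exponent $n$ propagates via $\gamma_{i+1}(H)^n\le[\gamma_i(H)^n,H]\le\gamma_{i+1}(G)$, and then runs a separate induction for class $2$ in which the exponent compounds at each step ($n_{i+1}=a^3m^4$ with $m=n_i$ and $a\in\{1,2\}$ chosen by the parity of $n$), using the identity $[x^l,g]=[x,g]^l[[x,g]^{l(l-1)/2},x]$ and the decomposition $H=\phi(G)H'$ at every stage. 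You instead factor through $H/D$ with $D=[H',H']$, obtaining the uniform statement $\gamma_i(H)^n\le\phi(\gamma_i(G))D$ for \emph{all} $i$ at once from the abelian-case computation, and then dissolve the $D$-valued discrepancies with your piece lemma $[P,Q]^{n^2}=[P^n,Q^n]=\phi([P_0,Q_0])$. The raw material is identical (the class-$2$ commutator identities, centrality of $[H',H']$ in $H'$, and $H=\phi(G)\gamma_2(H)$), but your separation of concerns buys something: the mod-$D$ containment is established once with the fixed exponent $n$ and is never entangled with the exponents $n_i$ being constructed, which is exactly the non-circularity point you rightly flag, whereas the paper's exponents feed back into themselves ($n_{i+1}$ is a polynomial in $n_i$). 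Your closing ``arithmetic adjustment'' to a $\tau'$-number is doing the same parity bookkeeping as the paper's factor $a$ --- a factor of $2$ may only be introduced when $2\mid n$, hence when $2$ is already in $\tau'$ --- so that line deserves to be written out, but it is the identical routine the paper performs and is not a gap.
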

\begin{proof}  We will   replace $G$ by $\phi(G)$ so that we are assuming that $G\le H$. If $h\in H,$ then $h=gk,$ for some  $g\in G,\, k\in H'$,  since the inclusion induces an isomorphism $G_{ab} \cong H_{ab}$.
\vq  Suppose that  $H'$ is abelian  and  $\gamma_i (H)^{n}\le \gamma_i (G)$, for some  $i\ge 2$.  If $x\in \gamma_i(H),$  $h\in H,$ then $[x^l,h]  = [x,h]^l$, for all $l.$ 
Therefore 
$$\gamma_{i+1} (H)^n \le [\gamma_i(H)^n , H]\le [\gamma_i(G) ,G H']= [\gamma_i(G) ,G ]=\gamma_{i+1} (G) .$$ 

 \vskip 0.25cm  Suppose that $H'$ is nilpotent of class $2$ and   $\gamma_i (H)^{m}\le \gamma_i (G)$, for some  $i\ge 2$ and some $\pi(n)$-number  $m $ where  $n|m.$ Set $$a=\left\{ \begin{matrix} 1, &  {\rm if} \; 2\not| n \cr   2 , &  {\rm if} \; 2 \;| n \end{matrix} \right. .$$  Since $H'$ is nilpotent of class $2,$    
$$[x,y]^l=[x^l,y]=[x,y^l], \;\;(xy)^{l}=x^ly^{l}[y,x]^{l(l-1)/2}, $$ for all   $ l\in \N ,  \;x,y\in H'.$ This implies that if $x,y\in \gamma_{i+1} (H)$ and $x^l,\,y^l\in \gamma_{i+1} (G),$ then $(xy)^{al^2} \in  \gamma_{i+1} (G) $ for any $\pi (n)-$number $l$, where $n|l.$    Given any $r\in N,$   $$\ba{ll} \gamma_{i+1}(H)^{rm^2} &\ds =[\gamma_i(H), H]^{rm^2}=[\gamma_i(H), H'G]^{rm^2}= [\gamma_i(H),G]^{rm^2}[\gamma_i(H), H']^{rm^2}\\ &\\ &\ds = [\gamma_i(H),G]^{rm^2}[\gamma_i(H)^m, (H')^m]^r\le  [\gamma_i(H),G]^{rm^2} \gamma_{i+1} (G).\ea $$   If  $x\in \gamma_i( H), g \in G, \;l\in\N,$ then      $$\ba{ll} & [x^l, g]=[x,g]^{x^{l-1}+\dots+x+1}= [x,g]^l \prod_{j=1}^{l-1} [x,g,x^j]= [x,g]^l[x,g,x^{l(l-1)/2} ]= [x,g]^l[[x,g]^{l(l-1)/2},x ]\\ & \\ \Longrightarrow & [x,g]^{am^2}=[x^{am^2}, g][[x,g]^m, x^{-am(m^2-1)/2}]\in \gamma_{i+1} (G) .\ea$$ Set $ n_{i+1}={a^3m^4},$ then     $$ [\gamma_i(H),G]^{n_{i+1}}\le \gamma_{i+1} (G) \Longrightarrow   \gamma_{i+1}(H)^{n_{i+1}}\le \gamma_{i+1} (G) .  $$      Note that $n|n_{i+1}$ and $n_{i+1}$ is a $\pi(n)-$number.

\vskip 0.5cm   Therefore when $H'$ is nilpotent of class $\le 2$ and $n$ is a $\tau'$-number the hypothesis of Theorem 2.2  is satisfied and thus $H$ is para-$G$ via the inclusion. $\square$

\begin{lemma} Suppose that     $G  $ is a residually-nilpotent   group  and   $  M \triangleleft G$ where $G/M$ is nilpotent. If  
$$R_j=G/Z_j (M),\;\;\;M_j=  M/Z_j (M),$$  then $R_j$ is residually nilpotent, $R_j/M_j$ is nilpotent  and $R_{j+1} \cong  R_j /Z(M_j).$
\end{lemma} \begin{proof} Given any $j\ge 1,$  $M_j \triangleleft R_j  ,$  $$R_j/M_j=(G/Z_j(M))/(M/Z_j(M))\cong G/M$$ is nilpotent and $$\ba{ll} & \ds Z(M_j)=Z(M/Z_j (M))=Z_{j+1}(M)/Z_j (M) \\ &\\ \Longrightarrow  &\ds R_{j+1} =G/Z_{j+1} (M) \cong (G/Z_j(M))/(Z_{j+1}(M)/Z_j(M))=  R_j /Z(M_j) .\ea$$
\par  Let $\rho:G\rightarrow R_1$ denote the projection.  If $\rho(x) \ds \in\bigcap_{i=1}^\infty   \gamma_i (R_1) ,$ then $$\ds x\in\bigcap_{i=1}^\infty   Z(M)\gamma_i (G)\le M\Longrightarrow [x,y] \in \ds \bigcap_{i=1}^\infty    \gamma_i (G)=1,\;\;{\rm for \;all} \;y\in M,$$  hence $x\in Z(M)$ and  $R_1$ is residually nilpotent.
\vq  Suppose that  $R_j $ is residually nilpotent, for some $j\ge 1,$ then, by the above,  $R_{j+1}=R_j/Z(M_j)$ is also residually nilpotent.  $\square $
\end{proof} 	

\begin{lemma} Suppose that  $G, \,H $ are residually nilpotent  groups and $H$ is para-$G$ via $\phi.$  If   $ N \triangleleft H$ where $H/N$ is nilpotent and $M=\phi^{-1} (N)$, then $\phi^{-1} (Z_j (N)))=Z_j (M)$ and $\phi$ induces a monomorphism $\mu_j :G/Z_j(M)\rightarrow H/Z_j (N),$   for all $1\le j .$ 
		\end{lemma}
		\begin{proof}   Suppose that $\gamma_k (H)\le N,$ then $\gamma_k (G)\le M$.       Trivially $\phi^{-1} (Z_0 (N))=Z_0(M).$ Assume that $\phi^{-1}  (Z_j (N))= Z_j (M),$ for some $j\ge 0.$  If  $\phi (x)\in  Z_{j+1} (N),$ then $x\in Z_{j+1} (M)$ and $\phi^{-1} (Z_{j+1} (N))\le Z_{j+1} (M).$     If   $x\in Z_{j+1}  (M) , y\in N,$  then there exists $x_i\in M$ such that $\phi(x_i)\gamma_i (H)=y \gamma_i (H)  ,$ for all $i\ge k ,$ therefore $[x,x_i]\in Z_j(M)$, $\phi([x,x_i])\in Z_j (N)$   and    $$[\phi(x),y]\gamma_{i+1} (H)=[\phi(x),\phi(x_i)]\gamma_{i+1} (H)\Longrightarrow [\phi (x),y]  \in \bigcap_{i=k}^\infty \gamma_{i+1} (H)Z_j (N)=Z_j (N),$$ by Lemma 2.4. Therefore $\phi (x)\in Z_{j+1}(N)$ and  $  \phi^{-1} (Z_{j+1} (N))=Z_{j+1} (M).$  
	
\vq Since $\phi^{-1} (Z_j (N)))=Z_j (M),$  $\phi$ induces a monomorphism $\mu_j :G/Z_j(M)\rightarrow H/Z_j (N),$ for all $1\le j .$ 		$\square $
\end{proof}
\begin{prop} Let $G,\; H $ be finitely generated residually nilpotent groups  where  $\tau=\tau (G)$ is finite.  Suppose that     $\phi: G\rightarrow  H$ is  a   lower central  $\tau$-monomorphism. Let    $N=I_{\pi} ( \gamma_k (H))$   and $M=I_{\pi}(\gamma_k (G))$   where $\pi\subseteq \tau$.  If $\tau (G/Z_j(M)) \subseteq \tau ,$ for some $j\ge 1   ,$ then    $H/Z_j(N)$ is para $G/Z_j(M).$  Furthermore  if     $G/Z_j(M) \not\cong H/Z_j(N)  ,$ then $ G\not  \cong H.$
\end{prop}
\begin{proof}  The monomorphism $\phi:G\rightarrow H$ satisfies the hypothesis of Theorem 2.2 and therefore $H$ is para-$G$ via $\phi.$ Hence $\phi$ induces an isomorphism $G/\gamma_k (G)\cong H/ \gamma_k (H)$ which implies that   $\phi^{-1} (\gamma_k (H))=\gamma_k (G),$ $\phi^{-1}(N)=M$ and $|N/\gamma_k (H)|=|M/\gamma_k (G)|.$ Set $m=|N/\gamma_k (H)|,$ 
$$R=G/Z_j(M),\; M_j=M/Z_j (M),\;S=H/Z_j(N),\; N_j= N/Z_j(N).$$   By Lemma 2.4, $R,\, S  $ are residually nilpotent.   By Lemma 2.5, $  Z_j (M)=\phi^{-1}( Z_j (N))$  and     $\phi :G\rightarrow  H$ induces a monomorphism $\mu : R \rightarrow S.$   
\vq  Let $\rho :G\rightarrow R$ denote the projection. If $\rho (x)\in I_\pi (\gamma_k (R))$, then, for some $\pi$-number $a$,  $$x^a\in Z_j(M)\gamma_k (G)\le M \Longrightarrow x\in M$$ and $I_\pi (\gamma_k (R))=M_j.$  Similarly $I_\pi(\gamma_k (S))=N_j.$
  
\vq    By the hypothesis $(m, n_k)=1$ and we can find $\alpha ,\,\beta \in \Z$ such that  $n=\alpha n_k=1 -\beta m.$  Suppose that $\phi (g)\in Z_j(N)\gamma_k (N)$,  then $g\in M,\;\phi(g^n)  Z_j (N) =\phi(u)Z_j(N),$ for some $u\in \gamma_k (G),$ and therefore $$\phi(g^nu^{-1})\in Z_j(N)\Longrightarrow g^nu^{-1}\in Z_j(M) \Longrightarrow  g=g^nu^{-1}u g^{\beta m} \in Z_j(M)\gamma_k (G). $$  Hence $\phi^{-1} (Z_j(N)\gamma_k (H))=Z_j(M)\gamma_k (G)$ and $\mu$ induces   an isomorphism    $$ R /\gamma_k (R )\cong  G/Z_j(M)\gamma_k (G)  \cong H/Z_j(N)\gamma_k (H) \cong S /\gamma_k (S )  $$  and hence an isomorphism $R_{ab}\cong S_{ab}.$ Given any $i\ge 2$,  we have the following commutative diagram $$\comdiag {  \gamma_i (G) &\mapright{}& \gamma_i(H)  \\  \lmapdown{  } &&\rmapdown{ }
 \\   \gamma_i(R )  &  \mapright{ } &\gamma_i(S ) }$$  where the rightward arrows are induced by $\phi$ and $\mu$ respectively  and downward arrows   are epimorphisms, therefore $\gamma_i(S )^{n_i} \le \mu (\gamma_i(R ))$ and since $\tau (R)\subseteq \tau,$ $n_i$ is a $\tau (R)'$-number.  Hence   $S$ is para-$R$ via $\mu,$ by Theorem 2.2.

\vq Suppose that $\alpha : G\rightarrow H$ is an isomorphism. Then $$\alpha (\gamma_k (G))=\gamma_k(H) \Longrightarrow \alpha(M)=N\Longrightarrow \alpha (Z_j (M))=Z_j (N)$$ and $\alpha$ induces an isomorphism $G/Z_j (M)\cong H/Z_j (N).$     $\square$
\end{proof}
 
\end{proof}
		\section{The nilpotent genus of polycyclic groups }  
	  
\vq Every polycyclic group has a characteristic subgroup of finite index which is both residually nilpotent and nilpotent by abelian so for the rest of this paper we will focus on   the class ${\cal P}^*$ of polycyclic groups which are residually nilpotent and nilpotent by abelian. We will denote by ${\cal P}_c^*$ the class of polycyclic groups which are residually nilpotent and nilpotent of class $c$ by abelian. 
	
 \begin{lemma} If $  G, H $ are residually nilpotent polycyclic groups   and $H$ is para-$G$ via $\phi$, then $$G\in {\cal P}_c^* \Longleftrightarrow  H\in {\cal P}_c^*.$$ 
		\end{lemma}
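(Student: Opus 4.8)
The plan is to reduce membership in ${\cal P}_c^*$ to a single isomorphism invariant — the nilpotency class of the derived subgroup — and then to show that this class is visible in the lower central quotients $G/\gamma_i(G)$, which $\phi$ preserves. First I would observe that for a residually nilpotent polycyclic group $K$ the derived subgroup $K'$ is polycyclic, and that $K$ lies in ${\cal P}_c^*$ exactly when $K'$ is nilpotent of class $c$: indeed $K/N$ is abelian if and only if $K'\le N$, so $K'$ is the smallest normal subgroup with abelian quotient and its class is the least $c$ for which $K$ is nilpotent-of-class-$c$-by-abelian. Since $H$ is para-$G$ via $\phi$, the maps $\phi_i\colon G/\gamma_i(G)\to H/\gamma_i(H)$ are isomorphisms, hence $(G/\gamma_i(G))'\cong (H/\gamma_i(H))'$ for every $i$. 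Thus it suffices to prove that $G'$ and $H'$ have the same nilpotency class, with the convention that a non-nilpotent derived subgroup has class $\infty$.

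The key step is the following detection statement: for a residually nilpotent group $K$ and any integer $c\ge 0$ (with the convention that class $0$ means the trivial group), one has $\gamma_{c+1}(K')=1$ if and only if $(K/\gamma_i(K))'$ is nilpotent of class at most $c$ for every $i$. To prove it I would use the standard identities $(K/\gamma_i(K))'=K'\gamma_i(K)/\gamma_i(K)\cong K'/(K'\cap\gamma_i(K))$ and $\gamma_{c+1}(K'/(K'\cap\gamma_i(K)))=\gamma_{c+1}(K')(K'\cap\gamma_i(K))/(K'\cap\gamma_i(K))$. The latter is trivial precisely when $\gamma_{c+1}(K')\le\gamma_i(K)$, so $(K/\gamma_i(K))'$ has class at most $c$ for all $i$ if and only if $\gamma_{c+1}(K')\le\bigcap_i\gamma_i(K)$. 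Residual nilpotence of $K$ gives $\bigcap_i\gamma_i(K)=1$, whence $\gamma_{c+1}(K')=1$; the reverse implication is immediate. This is the only place where the residual nilpotence hypothesis is genuinely needed.

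Finally I would combine the two ingredients. Fixing $c\ge 0$ and applying the detection statement to $G$ and to $H$, together with the isomorphisms $(G/\gamma_i(G))'\cong (H/\gamma_i(H))'$, yields $$\gamma_{c+1}(G')=1\iff\gamma_{c+1}(H')=1.$$ As this holds for every $c$, the groups $G'$ and $H'$ have exactly the same nilpotency class; in particular one is nilpotent if and only if the other is. Recalling the reduction of the first paragraph, this gives $G\in{\cal P}_c^*$ if and only if $H\in{\cal P}_c^*$.

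I expect the main obstacle to be the detection step: one must rule out that passing to the stage $K/\gamma_i(K)$ artificially lowers the class of the derived subgroup. The point is that the offending term $\gamma_{c+1}(K')$ survives in $K/\gamma_i(K)$ for large $i$ precisely because $\bigcap_i\gamma_i(K)=1$, so the class of $K'$ is faithfully reflected in the lower central quotients — and these are exactly what para-equivalence preserves.
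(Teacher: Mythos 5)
Your argument is correct. Its heart --- that residual nilpotence of $K$ makes the nilpotency class of $K'$ visible in the quotients $K/\gamma_i(K)$, via $(K/\gamma_i(K))'\cong K'/(K'\cap\gamma_i(K))$ and $\bigcap_i\gamma_i(K)=1$ --- is exactly the idea behind the paper's forward implication, which it carries out concretely by lifting $h_1,\dots,h_{c+1}\in H'$ modulo $\gamma_{i+1}(H)$ to images of elements of $G'$ and observing that the $(c+1)$-fold commutator then lies in every term of the lower central series of $H$. Where you genuinely diverge is the converse: your detection lemma is symmetric in $G$ and $H$, so both implications follow at once from the isomorphisms $(G/\gamma_i(G))'\cong(H/\gamma_i(H))'$, and you never use injectivity of $\phi$. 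The paper instead obtains the converse from the embedding $G\cong\phi(G)\le H$ (so $G'$ has class $k\le c$) combined with the already-proved forward direction (forcing $k=c$). Your route buys a marginally stronger and cleaner statement --- the classes of $G'$ and $H'$ coincide for any pair of residually nilpotent groups whose lower central quotients are isomorphic, monomorphism or not --- while the paper's converse is shorter but leans on the full para-$G$ hypothesis. The only point worth making explicit in a final write-up is the convention, which you do state, that membership in ${\cal P}_c^*$ pins down the class of the derived subgroup as exactly $c$, since that is what makes the equality of classes (rather than a one-sided inequality) the right invariant to track.
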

		\begin{proof}  If  $G\in {\cal P}_c^* $, then $G'$ is nilpotent of class $c$. If   $h_1,\dots ,h_{c+1}\in H',$ then given any $i$, there exists $g(i)_j\in G'$ such that $h_j\gamma_{i+1} (H)=\phi(g(i)_j )\gamma_{i+1}  (H),$ for all $1\le j\le c+1,$  and  $$\ba{ll} & [h_1,\dots ,h_{c+1}] \gamma_{i+1} (H) = \phi([g(i)_1,\dots,g(i)_{c+1}])\gamma_{i+1} (H) =1\gamma_{i+1} (H) \\ & \\ \Longrightarrow & [h_1,\dots ,h_{c+1}]\in \ds  \bigcap_{i=1}^{\infty} \gamma_{i} (H) =1.\ea$$   Therefore $H'$ is nilpotent of class $c$ and  $ H\in {\cal P}_c^*.$  Conversely  suppose that $H\in {\cal P}_c^*,$ then  $G\cong \phi (G) \in {\cal P}_k^*,$ for some $k\le c.$  This would imply that $H\in {\cal P}_k^*$ and therefore $k=c.$   $\square $
\end{proof}
		 \vskip 0.5cm Baumslag, Mikhailov and Orr studied the nilpotent genus of groups  $G,H\in {\cal P}_1^*$, and showed that if $H$ is para-$G$, then $G$ is para-$H$ so that this is an equivalence relation. They did this by constructing the telescope of a metabelian group which is a type of group localization. We will give the bare details here but further details can be found in \cite{bmo2}. Let $G\in {\cal P}_1^*$ be a metabelian group with derived subgroup $A,\, Q=G_{ab}, $ integral group ring $R=\Z Q,$   augmentation ideal $I$ and set  $S=1+I.$ The ring $R_S$ is the set of equivalence class of elements $(a,s)\in R\times S$ subject to the equivalence relation $$(a,s)\sim (b,t) \; \;{\rm if \; there \; exists\;} u\in S\;\;{\rm such \; that}\; (at-bs)u=0.$$ The equivalence class of $(a,s)$ is denoted by $a/s.$ Likewise $A_S$ is the set of equivalence class of elements $(a,s)\in A\times S$ subject to the equivalence relation $$(a,s)\sim (b,t) \; \;{\rm if \; there \; exists\;} u\in S\;\;{\rm such \; that}\; (at-bs)u=0.$$ The equivalence class of $(a,s)$ is denoted by $a/s$ and $A_S$ is turned into an  $R_S$-module in the obvious way by defining $a/s \cdot r/t =ar/st .$ If we fix $s$ and consider the $R$-module $A_s=\{a/s\mid a\in A\}$, then the mapping $\tau_s:a\mapsto a/s$  maps $A$ isomorphically onto $A_s\le A_S$ provided that $as\not=0$, for every $a\in A\backslash \{0\}.$  Since $G$ is residually nilpotent, $S$ does not contain any zero divisors of $A$ so that  $\tau_s$ is monic and also the mapping $\tau:A\rightarrow A_S$ is monic where $a\mapsto a/1.$   If $\alpha\in \Hom_R(A, B)$ where $B$ is also an $R$-module, then we can extend it to $\alpha_S\in \Hom_{R_S}(A_S,B_S).$  We can form the semi-direct product $P=A_S\rtimes G$ and setting $K=\{(a/1,a^{-1})\mid a\in A\}$, the telescope of $G$ is the factor group $G_S= P/K.$  Viewing $G$ and $A_s$ as subgroups of $G_S$, set $G_s= GA_s.$  If $S=\{s_1,s_2,\dots \}$,   $t_n=s_1s_2\dots s_n$, $G_0=G$ and  $G_i=G_{t_i}$, then,   by Lemma 5.6 \cite{bmo2}, $G\cong G_i$, for all $i$ and $G_S=\bigcup_{i=1}^{\infty} G_i .$  
		  
\begin{theorem}  Let $G,H\in {\cal P}_1^*$ and    $\phi:G\rightarrow H$   a homomorphism that induces an isomorphism  $$ \phi_2:G_{ab}\rightarrow H_{ab}. $$   Then $h(H)\le h(G).$ 
\end{theorem}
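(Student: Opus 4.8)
### Plan of attack

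The statement is Theorem 3.2: for metabelian residually nilpotent polycyclic groups $G,H\in {\cal P}_1^*$ with a homomorphism $\phi:G\to H$ inducing an isomorphism on abelianizations, one has $h(H)\le h(G)$. The plan is to exploit the telescope construction from \cite{bmo2} described immediately before the statement, reducing the comparison of Hirsch lengths to a statement about finitely generated modules over the localized group ring $R_S = \Z Q_S$, where $Q=G_{ab}\cong H_{ab}$.

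First I would set up the common data. Since $\phi_2:G_{ab}\to H_{ab}$ is an isomorphism, both groups share (via $\phi$) the same abelianization $Q$, the same integral group ring $R=\Z Q$, the same augmentation ideal $I$, the same multiplicative set $S=1+I$, and hence the same localized ring $R_S$. Write $A=G'$ and $B=H'$; these are finitely generated $R$-modules on which $Q$ acts by conjugation, and $\phi$ restricts to an $R$-module homomorphism $A\to B$ compatible with $\phi_2$. The Hirsch length decomposes as $h(G)=h(Q)+h(A)$ and $h(H)=h(Q)+h(B)$, so it suffices to prove $h(B)\le h(A)$, i.e.\ $\mathrm{rank}_{\Z} B\le \mathrm{rank}_{\Z} A$ as finitely generated abelian groups.

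The key step is to pass to the telescopes $G_S=\bigcup_i G_i$ and $H_S=\bigcup_i H_i$. By the cited Lemma 5.6 of \cite{bmo2}, $G\cong G_i$ and $H\cong H_i$ for all $i$, so $h(G)=h(G_i)$ and $h(H)=h(H_i)$; in particular it is enough to compare Hirsch lengths at the telescope level, where the derived subgroups become the localized modules $A_S$ and $B_S$ over $R_S$. The map $\phi$ extends to $\phi_S:A_S\to B_S$, an $R_S$-module homomorphism lying over the identity on $Q$. The heart of the argument is then to show that $\phi_S$ is \emph{surjective}: because $\phi$ induces an isomorphism $G_{ab}\cong H_{ab}$, the induced map on the localized modules hits all of $B_S$ up to an element of $S$, and inverting $S$ makes this exact. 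Concretely, for any $b\in B$ and the structure of the semidirect-product relation defining the telescope, one writes $b$ as $\phi_S$ applied to an element of $A_S$ after clearing a denominator in $S$. Since $A_S$ is a localization of the finitely generated module $A$, any finitely generated $R_S$-submodule of $A_S$ has $\Z$-rank at most $h(A)$, and a surjection $A_S\twoheadrightarrow B_S$ forces the corresponding comparison of ranks of the finitely generated pieces $B_{t_i}$ against $A_{t_i}$, giving $h(B)=h(B_{t_i})\le h(A_{t_i})=h(A)$.

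The main obstacle I anticipate is precisely the surjectivity (or rank-comparison) of $\phi_S$ at the module level: the hypothesis only gives an isomorphism on abelianizations, not on the derived modules themselves, so one must show that localizing at $S=1+I$ is exactly what upgrades "$\phi$ is onto modulo the action of $Q$" to "$\phi_S$ is onto." This requires carefully tracking how the telescope relation $K=\{(a/1,a^{-1})\}$ identifies $A_s$ with a copy of $A$ inside $G_S$, and verifying that the image $\phi(A)$ generates $B_S$ as an $R_S$-module; the residual nilpotence of $H$ (which guarantees $S$ contains no zero-divisors of $B$) is what keeps the localization faithful and prevents rank from being lost. Once surjectivity of $\phi_S$ is established, the rank inequality $h(B)\le h(A)$, and hence $h(H)\le h(G)$, follows formally from the fact that a finitely generated module cannot surject onto one of strictly larger rank.
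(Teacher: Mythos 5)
Your proposal follows essentially the same route as the paper: both pass to the telescopes over $S=1+I$, use the fact that the abelianization isomorphism forces the induced map $\phi_S$ to be surjective (the paper cites Theorem 5.8 of \cite{bmo2} for this, at the level of $G_S\rightarrow H_S$ rather than of the modules $A_S\rightarrow B_S$), and then use finite generation of $H$ together with $G_S=\bigcup_i G_i$, $G_i\cong G$, to trap $H$ inside the image of a single layer and conclude $h(H)\le h(G)$. Splitting off $h(Q)$ and working with the derived modules is only a cosmetic repackaging of the paper's argument.
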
 
\begin{proof} By Lemma 2.1  (i) $\phi$ induces epimorphisms $\phi_i :G/\gamma_i(G)\rightarrow H/\gamma_i (H).$   As in Theorem 5.8 \cite{bmo2},  $\phi$ induces an epimorphism $\phi_S:G_S\rightarrow H_S.$  By Lemma 5.6 \cite{bmo2},  $G_S$ is a union of subgroups $G_i$ with    $G\cong G_i.$ Since  $H$ is finitely generated, we can find a $k$ such that $H\le \phi_S(G_k)$ and $h(H) \le h(G_k) =h(G).$ $\square$

\end{proof}

\begin{cor} Let $G,H\in {\cal P}^*$ and    $\phi:G\rightarrow H$   a monomorphism that induces an isomorphism   $\phi_2:G_{ab}\rightarrow H_{ab} .$ Suppose that $N\triangleleft H$ where $N\le H'$ and  $\tilde H=H/N\in {\cal P}_1^*.$ Set  $M=\phi^{-1} (N) $  and $\tilde G= G/M,$  so that  $\phi$ induces a monomorphism $\mu:\tilde G \rightarrow  \tilde H. $  Then $\tilde G  \in {\cal P}_1^*,$   $h(\tilde G)=h(\tilde H).$   
\end{cor}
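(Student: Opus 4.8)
The plan is to deduce the corollary from Theorem 3.2 together with the elementary fact that Hirsch length does not increase when one passes to a subgroup of a polycyclic group.

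First I would verify that $\tilde G\in{\cal P}_1^*$. Because $\mu:\tilde G\to\tilde H$ is a monomorphism, $\tilde G$ is isomorphic to the subgroup $\mu(\tilde G)$ of $\tilde H$. Now $\tilde H\in{\cal P}_1^*$ is metabelian and residually nilpotent, and both of these properties pass to subgroups, so $\mu(\tilde G)$, and hence $\tilde G$, is residually nilpotent and metabelian; it is polycyclic since it is a quotient of the polycyclic group $G$. Thus $\tilde G\in{\cal P}_1^*$, and in particular both $\tilde G$ and $\tilde H$ are eligible inputs for Theorem 3.2.

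Next I would check that $\mu$ induces an isomorphism on abelianizations, which is precisely what Theorem 3.2 requires. The crucial observation is that $M\le G'$: since $\phi_2:G_{ab}\to H_{ab}$ is an isomorphism we have $\phi^{-1}(H')=G'$, and as $N\le H'$ this gives $M=\phi^{-1}(N)\le\phi^{-1}(H')=G'$. Consequently $G'M=G'$, so $\tilde G_{ab}=G/G'M=G/G'=G_{ab}$; likewise $\tilde H_{ab}=H/H'N=H/H'=H_{ab}$ because $N\le H'$. Under these identifications the homomorphism induced by $\mu$ on abelianizations is precisely $\phi_2$, hence an isomorphism.

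Finally I would combine two inequalities. Applying Theorem 3.2 to the homomorphism $\mu:\tilde G\to\tilde H$ between ${\cal P}_1^*$-groups, which induces an isomorphism on abelianizations, yields $h(\tilde H)\le h(\tilde G)$. For the reverse direction I would use that $\mu$ is injective, so $\tilde G\cong\mu(\tilde G)$ is a subgroup of the polycyclic group $\tilde H$ and therefore $h(\tilde G)\le h(\tilde H)$; together these give $h(\tilde G)=h(\tilde H)$. There is no serious obstacle here: the single point needing care is the verification that $\mu$ induces an isomorphism on abelianizations, which rests entirely on the containment $M\le G'$, and the only ingredient genuinely beyond Theorem 3.2 (which supplies just $h(\tilde H)\le h(\tilde G)$) is the opposite inequality, obtained for free from the injectivity of $\mu$.
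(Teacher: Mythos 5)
Your proof is correct and follows essentially the same route as the paper: establish $M\le G'$ from $N\le H'$ and the isomorphism $\phi_2$, deduce that $\mu$ induces an isomorphism $\tilde G_{ab}\cong\tilde H_{ab}$, apply Theorem 3.2 for $h(\tilde H)\le h(\tilde G)$, and use injectivity of $\mu$ for the reverse inequality. The only difference is that you spell out the verification that $\tilde G\in{\cal P}_1^*$ and the identification $\phi^{-1}(H')=G'$ in more detail than the paper does.
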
 
\begin{proof} It is straightforward to see that $\mu$ is well-defined and is a monomorphism and hence $\tilde G  \in {\cal P}_1^*$.  By the hypothesis $N\le H'$ and $\phi_2 :G_{ab}\cong H_{ab}$ so that  $M\le G'$, $$\tilde G_{ab} \cong G_{ab} \cong H_{ab}\cong \tilde H_{ab}$$ and $\mu$ induces an isomorphism $\mu_2:\tilde{G}_{ab} \rightarrow \tilde{H}_{ab} .$   Both $\tilde{G}$ and $\tilde{H}$ are metabelian and residually nilpotent so that $\mu$ satisfies the hypothesis of Theorem 3.2 and hence $h(\tilde{H})\le h(\tilde{G}).$ Since $\mu$ is a monomorphism  this implies that $h(\tilde{H})= h(\tilde{G}).$ 
   $\square$
	\end{proof}

 \begin{theorem}  Let $G,H\in {\cal P}_c^*$ where $H$ is para-$G$ via $\phi$ and one of the following holds: \bi \item[(i)] $    G/G'',\;  H/H''$ are residually nilpotent;
\item[(ii)]  $|Z_{c-1} (G'): G''|,\; | Z_{c-1} (H'): H''|<\infty .$ \ei Then $h(H)=h(G)$.
\end{theorem}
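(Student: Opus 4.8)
The plan is to reduce the equality of Hirsch lengths to a finite-index statement, strip off the metabelian top layer with Corollary 3.3, and then control the nilpotent ``core'' $H''$, treating the two hypotheses separately.

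First I would replace $G$ by $\phi(G)$ and assume $G\le H$; since $H$ is para-$G$, $G\gamma_i(H)=H$ and $G\cap\gamma_i(H)=\gamma_i(G)$ for every $i$, and in particular $G'=G\cap H'$. As $\phi$ induces $G_{ab}\cong H_{ab}$ we have $h(G/G')=h(H/H')$, so $h(H)-h(G)=h(H')-h(G')$; because $G'\le H'$ are polycyclic, $h(G')=h(H')$ is equivalent to $|H':G'|<\infty$. Thus the theorem is exactly the assertion $|H':G'|<\infty$. Writing $|H':G'|=|H':G'H''|\,|H'':G'\cap H''|$ splits this into a metabelian part and a core part, the latter lying inside $H''$, which is nilpotent of class strictly below $c$.

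Under hypothesis (i) we have $H/H''\in{\cal P}_1^*$, so Corollary 3.3 applies with $N=H''$ and $M=\phi^{-1}(H'')=G'\cap H''$: it yields $h(G/M)=h(H/H'')$, hence $|H':G'H''|<\infty$, settling the metabelian part and reducing everything to $|H'':G'\cap H''|<\infty$. I would then induct on the class $c$, the base case $c=1$ being immediate from Theorem 3.2 and the injectivity of $\phi$ (which give $h(H)\le h(G)$ and $h(G)\le h(H)$ respectively). For the inductive step the task is to present $(G'\cap H'',H'')$ as an instance of the theorem one class lower; residual nilpotence of the quotients in (i) is what keeps the relevant groups in ${\cal P}^*$ and lets the comparison isomorphism descend to the abelianization of the core.

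Under hypothesis (ii) the quotient $H/H''$ need not be residually nilpotent, so Corollary 3.3 is not directly available and a different top layer is needed. Here I would use that $H'/Z_{c-1}(H')$ is abelian and that $H''=\gamma_2(H')\le Z_{c-1}(H')$, so the hypothesis $|Z_{c-1}(H'):H''|<\infty$ says precisely $h(Z_{c-1}(H'))=h(H'')$: the core absorbs all the Hirsch length of $Z_{c-1}(H')$. The plan is to run the metabelian comparison through the abelian quotient by $Z_{c-1}(\cdot)$ rather than by the derived subgroup, the finiteness in (ii), for both $G$ and $H$, aligning the core Hirsch lengths and reducing $|H':G'|<\infty$ to a finite-index statement inside $Z_{c-1}(H')$ that the hypothesis controls.

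\textbf{Main obstacle.} The difficulty is the descent to the core. The para-isomorphisms concern the lower central series of $H$, and they do not restrict to para-data on the derived subgroup $H''$ — for a nilpotent group a para-monomorphism forces equality, so $(G'\cap H'',H'')$ is genuinely not a para-pair, and in fact without an extra hypothesis the reduction only yields $h(G)\le h(H)$. Conditions (i) and (ii) are two separate devices that supply the missing Theorem 3.2 input at the core level — residual nilpotence in the first case, finiteness of $|Z_{c-1}(H'):H''|$ in the second — and verifying that each propagates the required finite-index (respectively isomorphism) data down one step of the derived or upper central series is the crux of the argument.
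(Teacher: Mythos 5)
Your first half matches the paper: in case (i) one shows $\phi^{-1}(H'')=G''$ (using residual nilpotence of $G/G''$ to get $x\in\bigcap_i G''\gamma_i(G)=G''$) and applies Corollary 3.3 with $N=H''$; in case (ii) one routes the same comparison through $Z_{c-1}(H')$ via Lemmas 2.4 and 2.5 and then uses the finiteness of $|Z_{c-1}(G'):G''|$ and $|Z_{c-1}(H'):H''|$ to transfer the equality to $h(G/G'')=h(H/H'')$. Subtracting $h(G_{ab})=h(H_{ab})$ then gives $h(G'/G'')=h(H'/H'')$, i.e.\ your ``metabelian part'' $|H':\phi(G')H''|<\infty$. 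Up to here your plan is sound.

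The gap is in what you call the main obstacle, and it is a genuine one: your plan for the ``core part'' $|H'':\phi(G')\cap H''|<\infty$ --- an induction on $c$ presenting $(\phi(G')\cap H'',H'')$ as an instance of the theorem one class lower --- is not carried out, and as you yourself observe it cannot be set up as a para-pair, so the induction has no base to stand on. The point you are missing is that no descent to $H''$ is needed at all. Since $H'$ is a finitely generated \emph{nilpotent} group and $H''=\gamma_2(H')$, the conclusion $|H':\phi(G')\gamma_2(H')|<\infty$, i.e.\ $(H')^m\le \phi(G')\gamma_2(H')$ for some $m$, already forces $(H')^n\le\phi(G')$ for some $n$ by Lemma 2.1(ii); hence $\phi(G')$ has finite index in $H'$, $h(G')=h(H')$, and $h(G)=h(G')+h(G_{ab})=h(H')+h(H_{ab})=h(H)$. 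In other words, hypotheses (i) and (ii) are consumed entirely in establishing the metabelian comparison; they play no role ``at the core level,'' because a subgroup of a finitely generated nilpotent group that has finite index modulo the derived subgroup automatically has finite index in the whole group. Your index factorisation $|H':\phi(G')|=|H':\phi(G')H''|\,|H'':\phi(G')\cap H''|$ is correct arithmetic, but the second factor should be finished off by this one application of Lemma 2.1(ii) rather than by a new inductive instance of the theorem.
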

\begin{proof} We will first show in  both cases that $h(G /G'')=h(H /H'').$
 \vq (i)  It is straightforward to see  that $\phi(G'')\le H''.$ Suppose that  $\phi (x) \in H'' ,$  then  $$\phi(x) \gamma_i (H) \in H''\gamma_i (H)/\gamma_i (H) = (H/\gamma_i(H))'' ,$$ for all $i$,  and, as $\phi_i $ is an isomorphism, this implies that $$x\gamma_i (G)\in G''\gamma_i(G)/\gamma_i(G)\Longrightarrow x\in \bigcap_{i=1}^\infty   G''\gamma_i (G)=G''.$$ Therefore $\phi^{-1} (H'')=G''$ and,   
by Corollary  3.3,   $h(G/G'')=h(H/H'').$     
\vq (ii) By Lemma 2.4, $H/Z_{c-1}(H') \in {\cal P}_1^*,$  by Lemma 2.5, $\phi^{-1} (Z_{c-1} (H'))=Z_{c-1} (G')$ and so,  by  Corollary 3.3, $h(G/Z_{c-1} (G'))=h(H/Z_{c-1} (H')).$       By the hypothesis   $$h(G/G'')=h(G/Z_{c-1} (G'))=  h(H/Z_{c-1} (H'))=h(H/H'').$$  
 \vskip 0.25 cm In both cases  $h(\phi(G)H''/H'')=h(G/G'')=h(H/H''). $  Since $\phi_2:G_{ab}\rightarrow H_{ab}$ is an isomorphism,    $$h(G_{ab})=h(H_{ab})\Longrightarrow h(G'/G'')=h(G/G'')-h(G_{ab})=h(H/H'') -h(H_{ab})=h(H'/H'').$$    Therefore $\phi(G')\gamma_2(H')/\gamma_2(H')$ has finite index in $H' /\gamma_2(H')$. As $H'$ is a finitely generated nilpotent group,   $\phi (G')$ has finite index in $H',$ by Lemma 2.1 (ii), so that $h(G')=h(H')$ and   $$h(G)=h(G')+h(G_{ab})=h(H')+h(H_{ab})=h(H).$$
 $\square$
\end{proof}
\vh When $h(G)=h(H)$ we have  the following  partial converse to Theorem 2.2: 
\begin{lemma}  Let $G,H\in {\cal P}_c^*$ where  $H$ is para-$G$ via $\phi$ and  $h(G)=h(H)$. Then $\phi$ is a lower central $\pi$-monomorphism, for some set of primes $\pi$ such that $\pi'$ is a finite set.

\end{lemma}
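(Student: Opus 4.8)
The plan is to identify $G$ with its image $\phi(G)$ and treat $G$ as a subgroup of $H$; since $H$ is para-$G$ via $\phi$, the induced maps $\phi_i\colon G/\gamma_i(G)\to H/\gamma_i(H)$ are isomorphisms for all $i$, which already supplies the required isomorphism $G_{ab}\cong H_{ab}$ (the case $i=2$). What remains is to produce a single finite set of primes $\pi'$ together with $\pi'$-numbers $n_i$ satisfying $\gamma_i(H)^{n_i}\le\gamma_i(G)$ for every $i\ge 2$. First I would extract the two coset-theoretic consequences of each $\phi_i$ being an isomorphism: injectivity gives $\gamma_i(G)=G\cap\gamma_i(H)$, and surjectivity gives $G\gamma_i(H)=H$.

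The key step is to pin down the indices $[\gamma_i(H):\gamma_i(G)]$. Because $G\le H$ are polycyclic with $h(G)=h(H)$, the subgroup $G$ has finite index $m:=[H:G]$ in $H$. As $\gamma_i(H)\triangleleft H$, the set $G\gamma_i(H)=H$ is a subgroup, and the standard identity $[AB:A]=[B:A\cap B]$ applied with $A=G$ and $B=\gamma_i(H)$ gives
$$[\gamma_i(H):\gamma_i(G)]=[\gamma_i(H):G\cap\gamma_i(H)]=[G\gamma_i(H):G]=[H:G]=m$$
for every $i\ge 2$. Thus all these indices equal the fixed finite number $m$, and in particular the relevant primes are confined to the finite set $\pi(m)$ independently of $i$.

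It then remains only to turn ``finite index $m$'' into a bounded power. For $i\ge 2$ the group $\gamma_i(H)\le H'$ is finitely generated nilpotent and $\gamma_i(G)$ is a subgroup of index $m$; passing to the normal core $N_i$ of $\gamma_i(G)$ in $\gamma_i(H)$ (normal, of index dividing $m!$) shows $\gamma_i(H)^{m!}\le N_i\le\gamma_i(G)$. Taking $n_i=m!$ for all $i$ and $\pi'=\pi(m!)$, a finite set, exhibits $\phi$ as a lower central $\pi$-monomorphism with $\pi$ the complement of $\pi'$. Using that $\gamma_i(H)$ is nilpotent one can sharpen $\pi(m!)$ to $\pi(m)$, since a normal Sylow $p$-subgroup of $\gamma_i(H)/N_i$ fixing the coset of $\gamma_i(G)$ must be trivial when $p\nmid m$; but the crude factorial bound already yields a finite $\pi'$, which is all the statement requires.

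I expect the one delicate point to be the uniform index computation rather than any single hard estimate: the obstruction is that $\gamma_i(G)$ need not be normal in $\gamma_i(H)$, so one cannot simply form the quotient $\gamma_i(H)/\gamma_i(G)$ and read off an exponent. This is exactly what the coset identity $[AB:A]=[B:A\cap B]$ sidesteps, needing only $\gamma_i(H)\triangleleft H$, and the final passage to the normal core likewise avoids assuming normality. Everything else is routine.
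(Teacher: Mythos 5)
Your proof is correct, and it takes a genuinely different route from the paper's. The paper first deduces $h(\gamma_i(G))=h(\gamma_i(H))$ from $h(G)=h(H)$, concludes that $\phi(\gamma_i(G))$ has finite index in the finitely generated nilpotent group $\gamma_i(H)$ so that some $n_i$ exists for each $i$, and then controls the primes non-uniformly: it invokes Lemma 2.1\,(iv) to find $k$ with $\gamma_k(G)/\gamma_{k+j}(G)$ finite of exponent $m_j$ satisfying $\pi(m_j)\subseteq\pi(m_1)$, takes $n_{k+j}=n_km_j$ for the tail, and sets $\rho=\bigcup_{i=1}^k\pi(n_i)\cup\pi(m_1)$. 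You instead prove the sharper uniform statement that $[\gamma_i(H):\phi(\gamma_i(G))]=[H:\phi(G)]=m$ for every $i\ge 2$, by combining $G\cap\gamma_i(H)=\gamma_i(G)$ (injectivity of $\phi_i$), $G\gamma_i(H)=H$ (surjectivity of $\phi_i$), and the product formula $[AB:A]=[B:A\cap B]$, after which a single exponent $n_i=m!$ (via the normal core) serves for all $i$; the finiteness of $[H:\phi(G)]$ follows from $h(G)=h(H)$ for polycyclic groups, a standard fact that plays the role the Hirsch-length comparison of the $\gamma_i$ plays in the paper. Your version buys a cleaner and stronger conclusion (one exponent, one finite prime set $\pi(m!)$, or $\pi(m)$ if one sharpens via Lemma 2.1\,(ii) applied to $\gamma_i(H)^m\le\gamma_i(G)\gamma_i(H)'$, which is a tidier justification than your Sylow remark) and avoids Lemma 2.1\,(iv) altogether; the paper's version avoids the normal-core/index bookkeeping but at the cost of an index-dependent analysis of the lower central factors. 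Your identification of the delicate point is also apt: non-normality of $\gamma_i(G)$ in $\gamma_i(H)$ is exactly what the product formula and the normal core are there to circumvent.
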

\begin{proof}  By  definition $\phi$ induces an isomorphism $G/\gamma_i(G)\cong H/\gamma_i (H)$, for any $i\ge 2$,  and so  $h(\gamma_i (G))=h(\gamma_i(H)).$ Since $\gamma_i(H)$ and $\gamma_i(G)$ are finitely generated nilpotent groups, this implies that 
$\gamma_i(H)^{n_i}\le \phi(\gamma_i (G))$, for some $n_i$.  By Lemma 2.1 (iv),   there exists $k$ such that $|\gamma_k(G)/\gamma_{k+j} (G)|<\infty,$ for all $j\ge 1$, and   $\pi(m_j)\subseteq \pi(m_1)$ where $m_j= \exp(\gamma_k(G)/\gamma_{k+j} (G))$.  If $h\in \gamma_{k+j} (H),$ then $h^{n_km_j}\in \phi(\gamma_{k+j} (G)).$ Set $\rho =\displaystyle \bigcup_{i=1}^k \pi(n_i)\cup\pi(m_1)  $ and $\pi=\rho',$  then $\phi$ is a lower central $\pi$-monomorphism.   $\square$
\end{proof}
\vq Note we cannot stipulate that $\tau (G)\subseteq \pi$. Let  $G_1,\,H_1\in {\cal P}_c^*$ where    $h(G_1)=h(H_1),$ $\tau_1 =\tau (G_1)$ and $\mu:G_1\rightarrow H_1$ is  a lower central $\tau_1$-monomorphism. Then $H_1$ is para-$G_1$ via $\mu$, by Theorem 2.2. Suppose that $p|n_2$ where $\gamma_2 (H_1)^{n_2}\le \mu (\gamma_2(G_1)) $ and $n_2$ is a $\tau_1'$-number. Set $G=C_p\times G_1,\,H=C_p\times H_1$ and define $\phi:G\rightarrow H$ by $$(x,y)\mapsto (x,\mu(y)).$$ Then $H$ is para-$G$ via $\phi,$ $h(G)=h(H)$ and $\gamma_2(H)^{n_2} \le \phi( \gamma_2(G))$  but $\tau=\tau(G)=\tau_1\cup \{p\}$ so that $\phi$ is not a  lower central $\tau$-monomorphism.

\vh    In   \cite{bmo2},  Baumslag, Mikhailov and Orr  found  a finitely generated abelian by cyclic  group $G$ such that the nilpotent genus of $G$ is nontrivial. The following   results  have been proven with an aim to extend their example. 

\begin{lemma} Let $N$ be a finitely generated torsion-free nilpotent group of class $c$ and suppose that    $\;\;\;\;\;$    $<\alpha_i\mid 1\le i\le r> \le \Aut (N)$ is  abelian   and  there exists  $a_i\in \N$  such that  $$\ds\Pi_{j=0}^{a_i-1} \alpha_i^{j} (x)\in N',$$ for all $x\in N,\; 1\le i\le r.$  Let $A$ be the free abelian group  on $\{t_1,\dots , t_r\}$ and set $x^{t_i}=\alpha_i (x)$, for all $x \in N,  $  and   $H= N\rtimes A $ using this action.  If $a=\gcd (a_i)$ and $b={\rm lcm} (a_i),$ then  $H$ is a residually finite $b$-group, $N=I_{\pi (a)} (H') $  and $\tau (H)=\pi(a).$  In particular if $a=b=p$ is a prime, then $H\in{\cal P}^*$.\end{lemma}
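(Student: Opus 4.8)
The plan is to push everything down to the action of $A$ on the free abelian lower central factors $W_k=\gamma_k(N)/\gamma_{k+1}(N)$ of $N$, viewed as modules over $\Z A$, and to control the primes through the automorphisms $\alpha_i$. First I would record the structural facts: $H=N\rtimes A$ is an extension of the polycyclic group $N$ by the polycyclic group $A$, hence polycyclic, and $H'\le N$ because $H/N\cong A$ is abelian; as $N$ is nilpotent this already makes $H$ nilpotent-by-abelian, so the only real content of the assertion $H\in{\cal P}^*$ is residual nilpotence. The hypothesis $\prod_{j=0}^{a_i-1}\alpha_i^j(x)\in N'$ says precisely that the automorphism $\bar\alpha_i$ induced by $\alpha_i$ on $V:=N_{ab}$ satisfies $1+\bar\alpha_i+\cdots+\bar\alpha_i^{a_i-1}=0$. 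Since $(x^{a_i}-1)/(x-1)$ is separable with all roots nontrivial roots of unity, $\bar\alpha_i$ is diagonalisable with eigenvalues $a_i$-th roots of unity distinct from $1$; thus $\bar\alpha_i^{a_i}=1$ on $V$, the endomorphism $\Delta_i:=\bar\alpha_i-1$ is invertible over $\Q$, and $\det\Delta_i=\prod_\lambda(\lambda-1)$ is, up to sign, a product of values $\Phi_d(1)$ and hence a $\pi(a_i)$-number. Finally, applying Lemma 2.1 (iii) to $\langle\alpha_i^{a_i}\rangle$ (abelian and trivial on $V$) shows that $\alpha_i^{a_i}$ acts trivially on every layer $W_k$.

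The crux is the residual $b$-group property, which I would prove by producing enough finite quotients of $\pi(b)$-order. Fix a prime $p\mid b$ and a characteristic subgroup $N_p\le N$ with $N/N_p$ a finite $p$-group refining the lower central series of $N$ (such $N_p$ exist since $N$ is torsion-free nilpotent, hence residually a finite $p$-group). Then $N_p$ is $A$-invariant, and on each layer of $N/N_p$ the map $\bar\alpha_i^{a_i}$ is trivial, so the image of $\alpha_i$ in $\Aut(N/N_p)$ factors as a part of order dividing $a_i$ times a unipotent, hence $p$-power order, part; consequently $A$ acts on $N/N_p$ through a finite abelian group of $\pi(b)$-order, with kernel $A_p$. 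Since $A_p$ is central in $H/N_p$, each $p^kA_p$ is normal there and $(N/N_p)\rtimes(A/p^kA_p)$ is a finite quotient of $H$ whose order is a $\pi(b)$-number. Given $1\neq h=n\sigma$ with $n\in N$, $\sigma\in A$: if $\sigma\neq1$ then, as $b\ge 2$, I choose $p\mid b$ and $k$ large so that $\sigma\notin p^kA_p$; if $\sigma=1$, residual $p$-finiteness of $N$ gives $p\mid b$ and $N_p$ with $n\notin N_p$. In either case $h$ survives, so $H$ is residually a finite $b$-group. When $a=b=p$ every such quotient is a finite $p$-group, so $H$ is residually a finite $p$-group, in particular residually nilpotent, and therefore $H\in{\cal P}^*$.

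For the isolator, $H'=N'[N,A]$ gives $N/H'\cong V/\sum_i\Delta_iV$, and the expansion $a_i+\Delta_i(\cdots)=0$ yields $a_iV\subseteq\Delta_iV$, so $N/H'$ is finite of exponent dividing $a$, a $\pi(a)$-number; hence $N\le I_{\pi(a)}(H')$, while $H/N\cong\Z^r$ is torsion-free and forces the reverse inclusion, giving $N=I_{\pi(a)}(H')$. The same computation, carried out with all $t_i$ simultaneously at each step, propagates the relation $aV\subseteq\sum_i\Delta_iV$ to every layer and shows that the torsion of each $\gamma_i(H)/\gamma_{i+1}(H)$ is killed by a $\pi(a)$-number (off their trivial eigenspaces the $\Delta_i$ are invertible with $\pi(a_i)$-power-bounded denominators); this gives $\tau(H)\subseteq\pi(a)$, and conversely one checks that each $p\mid a$ is realised as $p$-torsion in a low factor, yielding $\tau(H)=\pi(a)$.

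The main obstacle is the prime bookkeeping in the residual-finiteness step: one must verify that the image of $A$ in $\Aut(N/N_p)$ genuinely has $\pi(b)$-order — this is where the unipotent-part observation and Lemma 2.1 (iii) are essential — and that the assembled quotients separate every nontrivial element while keeping the order a $\pi(b)$-number. The equality $\tau(H)=\pi(a)$ is then a layer-by-layer calculation with the determinants $\det\Delta_i$; the one delicate point is the lower bound, for which one should take each $a_i$ to be the order of $\bar\alpha_i$ on $N_{ab}$ so that the equality (rather than a mere inclusion) holds.
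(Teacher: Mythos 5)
Your proof is correct in substance, and for the parts $N=I_{\pi(a)}(H')$ and the bound on the exponents of the lower central factors it coincides with the paper's computation: the paper works multiplicatively with $x^{t_i^{a_i-1}+\dots+t_i+1}\in N'$ (hence $x^{a_i}\in H'$, hence $x^a\in H'$) and with $[xu,yv]^a\in\gamma_3(H)$, which are exactly your relations $a_iV\subseteq\Delta_iV$ written in module notation. Where you take a genuinely different route is the central claim that $H$ is a residually finite $b$-group. The paper observes that $x^{t_i^{a_i}-1}=x^{(t_i^{a_i-1}+\dots+t_i+1)(t_i-1)}\in N'$, so $A^b$ acts trivially on $N_{ab}$; by Lemma 2.1(iii) the finite-index normal subgroup $K=N\rtimes A^b$ is then finitely generated torsion-free nilpotent of class $c$, hence residually a finite $b$-group, and $H/K\cong(\Z/b\Z)^r$ being a finite $b$-group finishes the argument. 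You instead manufacture the separating finite $\pi(b)$-quotients $H/\bigl(N_p(A_p)^{p^k}\bigr)$ by hand, controlling the order of the image of $A$ in $\Aut(N/N_p)$ via the observation that $\alpha_i^{a_i}$ stabilises the induced central series of $N/N_p$ and therefore maps to a $p$-element (the stability-group theorem for finite $p$-groups). Both arguments are sound and both ultimately hinge on Lemma 2.1(iii); the paper's is shorter because it delegates separation to the standard fact that finitely generated torsion-free nilpotent groups are residually finite $p$-groups for every $p$, whereas yours is more explicit but imports the stability theorem and needs the small extra checks that $N_p$ can be chosen characteristic and that $N\cap N_p(A_p)^{p^k}=N_p$.

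One caveat, which you rightly flag: both your argument and the paper's establish only $\tau(H)\subseteq\pi(a)$; the reverse inclusion is asserted but not proved, and it can genuinely fail when the $a_i$ are not the exact orders of the $\bar\alpha_i$ on $N_{ab}$. For instance, take $N=\Z[\zeta_{15}]$ (additive), $r=1$ and $\alpha$ multiplication by $\zeta_{15}$: the hypothesis holds with $a_1=15$, but $\zeta_{15}-1$ is a unit, so $H'=N$, every $\gamma_i(H)/\gamma_{i+1}(H)$ with $i\ge 2$ is trivial, and $\tau(H)=\emptyset\ne\{3,5\}$. Your suggestion to require each $a_i$ to be the order of $\bar\alpha_i$ on $N_{ab}$ is the right repair, but note that it strengthens the hypothesis rather than closing the gap as stated; in the paper's actual applications (Theorems 3.7 and 3.8, where $a=b=p$ and $\bar\alpha$ is fixed-point free of order exactly $p$) the equality does hold.
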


\begin{proof}  If  $x\in N$,  then  $$1N'=x^{t_i^{a_i-1}+\dots +t_i+1}N' =x [x,t_i^{a_i-1}]\dots x[x,t_i] xN' \Longrightarrow\ds x^{a_i} \in   H'  \Longrightarrow x^a \in H' $$  and $N=I_{\pi(a)} (H').$  If $x,y\in N,\;u,\,v\in A,$ then  $$\ba{ll} \ds [xu, yv]^a\gamma_3(H) &= ([x,v][x,y] [u ,y])^a \gamma_3 (H)= [x,v]^a[x,y]^a [u ,y]^a \gamma_3 (H) \\ &\\ &\ds =[x^a,v] [x^a,y][u,y^a]\gamma_3 (H)=1\gamma_3(H) \ea$$ so that $\gamma_{2}(H)/\gamma_{3}(H) $ is a finite group of exponent dividing $a$ and $\tau (H)=\pi(a).$  

\vskip 0.25cm Given any $x\in N, \,1\le i\le r,$ $$x^{t_i^{a_i}-1}=x^{(t_i^{a_i-1}+\dots +t_i+1)(t_i-1)}\in N'\Longrightarrow  x^{t^b}N'=xN',$$ for all $t\in A,$     and   $K=N\rtimes A^b$ is a finitely generated torsion-free nilpotent group of class $c$, by Lemma 2.1 (iii) and hence is a residually finite $b$-group. Since $K\triangleleft H$ and   $H/K$ is a finite $b$-group,  $H$ is also a residually finite $b$-group.  If $b=p,$ then the finite $p$-quotients are nilpotent so that $H$ is a residually nilpotent group. It is straightforward to see that $H$ is polycyclic and nilpotent by abelian  so  $H\in{\cal P}^*.$  $\square$ \end{proof} 

 From now on we will be considering the case where   $N$ is a finitely generated torsion-free nilpotent group and $\alpha\in \Aut (N).$ We will denote by  $T$  the infinite cyclic group on $t$  and set  $H=N\rtimes T$ where $t$ acts on $N$ by $\alpha$. We also consider  a subgroup  $M\le N$ where $M\triangleleft H$ and we set $G=M\rtimes T.$  We will also assume that $H$ is not nilpotent. 
\begin{theorem} Let  $N$ be  a finitely generated torsion-free nilpotent group of class  at most  $2$,    $\alpha\in \Aut (N)   $  such that   $$\ds\Pi_{j=0}^{p-1} \alpha^{j} (y)\in N',$$ for all $y\in N$ and some prime $p,$ and  $N_{ab}=<\overline\alpha^i (a) \mid 0\le i\le p-2>,$ for some $a\in N_{ab},$  where  $\overline\alpha$ is  the induced automorphism  in $ \Aut (N_{ab}).$         If $ M\triangleleft  H, $ $\ds\Pi_{j=0}^{p-1} \alpha^{j} (y)\in M',$ for all $y\in M$ and   $N^n\le M\le N$, for some $n$ prime to $p,$   then $H$ is para-$G$ via  inclusion.
\end{theorem}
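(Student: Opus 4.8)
The plan is to deduce the result from Corollary 2.3, whose hypotheses are tailored to the case where the derived group has class at most $2$. First I would record the ambient structure. Since $M\triangleleft H$ and $M\le N$, the automorphism $\alpha$ restricts to an automorphism of $M$, and $M$ is a finitely generated torsion-free nilpotent group of class at most $2$ (it has finite index in $N$, as $N^n\le M$). Applying Lemma 3.6 with $r=1$ and $a_1=p$ to both $H=N\rtimes T$ and $G=M\rtimes T$ shows that $G,H\in{\cal P}^*$ are residually nilpotent, that $\tau(G)=\tau(H)=\{p\}$, and that $N=I_{\{p\}}(H')$, $M=I_{\{p\}}(G')$. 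Because $H'=N'[N,T]\le N$, the group $H'$ has class at most $2$, so Corollary 2.3 will apply to the inclusion $\phi:G\hookrightarrow H$ as soon as I check two things: that $\phi$ induces an isomorphism $G_{ab}\cong H_{ab}$, and that $\gamma_2(H)^n\le\gamma_2(G)$ for some $p'$-number $n$ (note $\{p\}'=\tau'$).

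Next I would produce the epimorphism on abelianizations. Writing $\bar\alpha$ for the induced action on $N_{ab}$, the hypothesis $\prod_{j=0}^{p-1}\alpha^j(y)\in N'$ says $1+\bar\alpha+\dots+\bar\alpha^{p-1}=0$ on $N_{ab}$, so in $N/H'=N_{ab}/(1-\bar\alpha)N_{ab}$ one has $p\equiv 0$; combined with $N_{ab}=\langle\bar\alpha^i(a)\mid 0\le i\le p-2\rangle$ this identifies $N/H'$ as a cyclic $p$-group of order dividing $p$. On the other hand $N/M$ is a finite $p'$-group, since $N^n\le M$ with $\gcd(n,p)=1$. A group that is a common quotient of a $p$-group and a $p'$-group is trivial, so $N=MH'$ and hence $GH'=H$. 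By Lemma 2.1(i) the inclusion then induces an epimorphism $\psi:G_{ab}\twoheadrightarrow H_{ab}$, and more generally epimorphisms on all lower central quotients.

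The core computation is the $\gamma_2$ estimate. Applying Lemma 2.1(ii) to the pair $M\le N$ (using $N^{n_0}\le M\le MN'$) yields $p'$-numbers with $(N')^{n_2}\le M'\le G'$. For $x\in N$ put $c_x=[x,t]=x^{-1}\alpha(x)\in[N,T]$; collecting in the class-$2$ group $N$ gives $[x^{n_0},t]=c_x^{n_0}[c_x,x]^{\binom{n_0}{2}}$, and since $x^{n_0}\in M$ the left side lies in $[M,T]\le G'$. Using $(N')^{n_2}\le G'$ to absorb the central factor $[c_x,x]^{\binom{n_0}{2}}$ after raising to the power $n_2$, I obtain $c_x^{n_0 n_2}\in G'$. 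Thus, modulo $G'$, the generators of $N'$ have exponent dividing $n_2$ and the generators $c_x$ of $[N,T]$ have exponent dividing $n_0 n_2$; as $H'/G'$ is finitely generated nilpotent with central series $G'\le N'G'\le H'$ whose two factors have $p'$-exponent, $H'/G'$ is a finite $p'$-group. Setting $n=|H'/G'|$ gives $\gamma_2(H)^n\le\gamma_2(G)$ with $n$ a $p'$-number.

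Finally I would promote $\psi$ to an isomorphism and conclude. The torsion subgroup of $G_{ab}$ is $\bar G=M_{ab}/(1-\bar\alpha)M_{ab}$, which, exactly as for $N$ and now using $\prod_{j=0}^{p-1}\alpha^j(y)\in M'$ for $y\in M$, is a finite $p$-group; since $\psi$ restricts to the identity on the free summand coming from $T$, its kernel lies in $\bar G$. But $\ker\psi=(M\cap H')/G'$ has exponent dividing the $p'$-number $n$ from the previous step, so it is at once a $p$-group and a $p'$-group, forcing $\ker\psi=1$ and $G_{ab}\cong H_{ab}$. Corollary 2.3 now gives that $H$ is para-$G$ via the inclusion. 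I expect the main obstacle to be the class-$2$ commutator bookkeeping that controls $[N,T]$ modulo $G'$ (the heart of the $\gamma_2$ estimate), together with the careful separation of the $p$-part from the $p'$-part that collapses $\ker\psi$; the remaining structural facts follow cleanly from Lemmas 2.1 and 3.6.
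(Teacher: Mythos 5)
Your argument reaches the same two checkpoints as the paper's proof --- the isomorphism $G_{ab}\cong H_{ab}$ and the bound $\gamma_2(H)^{n}\le \gamma_2(G)$ for a $p'$-number $n$, fed into Corollary 2.3 --- but in the reverse order and by genuinely different means. The paper first proves the abelianization statement by an explicit generator computation: it picks $x\in N$ with $xN'=a$, shows $M\cap N'\le G'$, and writes every $y\in M$ as $x^{\alpha nk}$ modulo $G'$, so that $G_{ab}\cong C_p\times\langle t\rangle\cong H_{ab}$; the $\gamma_2$ bound then comes for free, since $h\in H'\le N$ gives $h^{n}\in M\cap H'\le G\cap H'=G'$. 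You instead get surjectivity on abelianizations cheaply (from $N/MH'$ being a common quotient of a $p$-group and a $p'$-group), prove the $\gamma_2$ bound by a direct class-$2$ commutator computation, and then use that bound to kill the ($p$-group) kernel on abelianizations. Each route makes one of the two steps easy at the cost of the other; yours has the merit of not needing the explicit cyclic-generation bookkeeping, the paper's of making the $\gamma_2$ estimate a one-liner.

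The one step you must shore up is the sentence that treats $H'/G'$ as a group with central series $G'\le N'G'\le H'$: a priori $G'$ is not normal in $H'$, since $[G',H']\le [N,N]=N'$ and you only know $(N')^{n_2}\le G'$, not $N'\le G'$. Normality does in fact hold, because $[G',H']\le [M,N]\le M\cap N'$ and $M\cap N'\le G'$; but the latter inclusion requires $M^p\le G'$, i.e.\ the hypothesis $\prod_{j=0}^{p-1}\alpha^{j}(y)\in M'$ for $y\in M$ (which you only invoke at the very end, for the torsion computation), combined with $(N')^{n_2}\le G'$ and $\gcd(p,n_2)=1$. Alternatively you can bypass the issue by working with $G'N'$, which is visibly normalized by $H'\le N$ because $N'\le Z(N)$: your computation shows $c_x^{\,n_0}\in G'N'$, the abelian group $H'/G'N'$ is generated by the images of the $c_x$, hence $h^{n_0}\in G'N'$ for every $h\in H'$, and $(gz)^{n_2}=g^{n_2}z^{n_2}\in G'$ for $g\in G'$ and central $z\in N'$, giving $h^{n_0n_2}\in G'$. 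With either repair your proof is complete.
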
 
\begin{proof} Let  $x\in N$ where  $xN'=a$. Then,  by Lemma 2.1 (i),  $$ N=<x^{t^i} \mid 0\le i\le p-2>.$$    By Lemma 3.6, $G$ and $ H$ are residually nilpotent groups, $\tau (G)=\tau(H)=p,$ $ M=I_p (G'),$ $ N=I_p (H')$  and  $H_{ab} \cong C_p \times <t>.$  In order to apply Corollary 2.3, we   will show that the inclusion of $G$ into $H$ induces an isomorphism $G_{ab}\cong H_{ab}$ and  $\gamma_2 (H)^{n} \le \gamma_2 (G).$     Since $M^p \le G'$ and, by Lemma 2.1 (ii),  $(N')^m\le  M'\le G',$ for some $m$ prime to $p,$ it follows  that $M\cap N'\le G'.$   We can find $\alpha $ such that $\alpha n \equiv 1\,\mod \,p.$ If  $y \in M, $ then $y = x^{k_1t^{l_1}} \dots x^{k_rt^{l_r}},$ for some $r, k_j,\,l_j \in \Z$ and    $$\ba{ll} &\ds y^{\alpha n}  x^{-\alpha nk_1t^{l_1}} \dots x^{-\alpha nk_rt^{l_r}} \in M\cap N' \le G'\\ &\\ \Longrightarrow & \ds yG'=y^{\alpha n}G' =  x^{ \alpha nk_1t^{l_1}} \dots x^{ \alpha nk_rt^{l_r}} G'= x^{\alpha nk}G'\ea $$ where $\ds k=\sum_{j=1}^r k_j.$ Therefore    $G_{ab} \cong C_p \times <t>$ and the inclusion of $G$ in $H$ induces an isomorphism $G_{ab}\cong H_{ab}$ where  $x^{an} t^b G' \mapsto  x^{an}t^b H'.$ If $h\in H'\le N,$ then $h^n \in M$ and, since  $G_{ab}\cong H_{ab},$ $h^n\in G'.$ 
 Therefore, by Corollary  2.3, $H$ is para-$G$ via the inclusion.    $\square$

\end{proof} 
 \begin{theorem} Let  $N$ be  a  finitely generated free nilpotent group of class $2.$   Suppose that  $\mu\in \Aut (N_{ab})   $       has order $p,$ for some prime $p,$   is fixed-point free and  $N_{ab}=<\mu^i (a) \mid 0\le i\le p-2>$, for some $a\in N_{ab},$ then $\mu$ lifts to an automorphism  $\alpha\in Aut (N).$   Let   $  \tilde H $ be the semi-direct product  of $  N_{ab}$ by $T$ with $t$ acting on $ \, N_{ab}$ by $ \mu$.   Suppose that $A\le N_{ab}$      where $A\triangleleft \tilde H$ and $N_{ab}^m\le A$, for some $m$ prime to $p,$ set $\tilde G=A\rtimes T.$ If $M\le N$ where $M\triangleleft H, \;M\cap N'=M'$ and   $MN'/N' =A$,  then $H$ is para-$G$ where $G=M\rtimes T.$  Furthermore if $\tilde G\not\cong \tilde H,$ then $G\not\cong  H.$ 
\end{theorem}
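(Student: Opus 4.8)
The plan is to verify the hypotheses of Theorem 3.7 for the data $(N,\alpha,M)$ and thereby obtain the first assertion, and then to deduce the ``furthermore'' clause from the canonical description of $M$ and $N$ as $p$-isolators. First, for the lift itself: since $N$ is free nilpotent of class $2$, the restriction map $\Aut(N)\to\Aut(N_{ab})$ is surjective (choose any lifts in $N$ of $\mu(\overline{x_i})$ on a basis $\{x_i\}$ and extend by the universal property of the free class-$2$ nilpotent group); the resulting endomorphism $\alpha$ induces $\mu$ on $N_{ab}$ and $\wedge^2\mu$ on $N'\cong\wedge^2 N_{ab}$, both automorphisms, so $\alpha\in\Aut(N)$.

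Next I would record the two numerical facts needed by Theorem 3.7. Because $\mu^p=1$ and $\mu$ is fixed-point free, the operator $\sigma=1+\mu+\cdots+\mu^{p-1}$ satisfies $\mu\sigma=\sigma$, so its image lies in the $\mu$-fixed points of $N_{ab}$ and hence $\sigma=0$. Reading this back in $N_{ab}$ gives $\prod_{j=0}^{p-1}\alpha^{j}(y)\in N'$ for every $y\in N$. Since $M\triangleleft H$ forces $\alpha(M)=M$, the same product lies in $M$ whenever $y\in M$, and being in $N'$ it lies in $M\cap N'=M'$; this is the product condition for $M$. For the index condition, $N_{ab}^m\le A=\overline{M}$ translates to $N^m\le MN'$, and Lemma 2.1(ii) (with $N$ playing the role of the ambient nilpotent group and $M$ the subgroup) yields $N^n\le M$ for some $\pi(m)$-number $n$, which is prime to $p$.

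All the hypotheses of Theorem 3.7 now hold for $N$, $\alpha$, $M$ (class $\le 2$, the two product conditions, and $N^n\le M\le N$ with $(n,p)=1$), so $H$ is para-$G$ via the inclusion, which is the first assertion. In the course of this (via Lemma 3.6) one also obtains $\tau(G)=\tau(H)=\{p\}$, together with $M=I_{p}(G')$ and $N=I_{p}(H')$, which are exactly the canonical subgroups I will use below.

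Finally, for the contrapositive of the last clause I would identify the two quotients: $\tilde H=N_{ab}\rtimes T\cong H/N'$, and since $M\cap N'=M'$ the map $m\mapsto mN'$ is a $T$-equivariant isomorphism $M/M'\cong MN'/N'=A$ (equivariance because $\alpha$ lifts $\mu$), whence $\tilde G=A\rtimes T\cong G/M'$. Now suppose $\beta\colon G\to H$ is an isomorphism. As the derived subgroup is characteristic and the $p$-isolator is a purely group-theoretic construction, $\beta(M)=\beta(I_p(G'))=I_p(\beta(G'))=I_p(H')=N$, and therefore $\beta(M')=[\beta(M),\beta(M)]=[N,N]=N'$. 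Hence $\beta$ descends to an isomorphism $G/M'\cong H/N'$, that is $\tilde G\cong\tilde H$; contrapositively, $\tilde G\not\cong\tilde H$ forces $G\not\cong H$. The one step deserving real care is this last transfer: it works precisely because $M=I_p(G')$ and $N=I_p(H')$ are canonically attached to $G$ and $H$, so an arbitrary abstract isomorphism must respect them and their derived subgroups, which is what permits pushing $G\cong H$ down to $\tilde G\cong\tilde H$.
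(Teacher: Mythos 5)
Your proposal is correct and its main body coincides with the paper's proof: both verify the hypotheses of Theorem 3.7 for $(N,\alpha,M)$ by deriving the product conditions from $\mu^p=1$ and fixed-point freeness, obtaining $M=I_p(G')$, $N=I_p(H')$ from Lemma 3.6, and getting $N^n\le M$ from $N_{ab}^m\le A$ via Lemma 2.1(ii). The two small divergences are in the bookends. For the lift of $\mu$ you argue directly from the universal property of the free class-$2$ nilpotent group (using $N'\cong\wedge^2 N_{ab}$), where the paper simply cites Theorem 2.1 of \cite{and}; your argument is sound and self-contained. For the ``furthermore'' clause the paper first identifies $M'=M\cap Z(N)=Z(M)$ via Lemma 2.5, writes $\tilde G=G/Z(M)$, $\tilde H=H/Z(N)$, and invokes Proposition 2.6, whereas you transfer an isomorphism $\beta\colon G\to H$ directly through the canonical subgroups: $\beta(I_p(G'))=I_p(H')$ gives $\beta(M)=N$, hence $\beta(M')=N'$ and $\tilde G\cong G/M'\cong H/N'\cong\tilde H$. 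This inlines exactly the argument used inside the proof of Proposition 2.6, but avoids the detour through upper central subgroups and the identity $M'=Z(M)$; it buys a shorter, more elementary deduction at the cost of not recording the additional fact (which the paper's route makes visible) that $\tilde H$ is para-$\tilde G$.
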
  
\begin{proof}  By  Theorem 2.1 \cite{and}, $\mu$ can be lifted to $\alpha \in \Aut (N).$   Since $\mu$ has  order $p$ and is fixed-point free,
$\ds\Pi_{j=0}^{p-1} \mu^{j} (b)=1,$ for all $b\in N_{ab}$ and therefore  $\ds\Pi_{j=0}^{p-1} \alpha^{j} (g)\in N',$ for all $g\in N$.  If $g \in M, $ then  $\ds\Pi_{j=0}^{p-1} \alpha^{j} (g)\in M\cap   N' =M'.$  Both $M$ and $N$ satisfy the hypothesis of Lemma 3.6, therefore  $M=I_p (G'),\, N=I_p(H').$ From the hypothesis, $N_{ab}^m\le A$ which implies  that $N^m \le MN' $ and  therefore $N^n\le M$,  for some $\pi (m)$-number $n$, by Lemma 2.1 (ii).   Hence    $\tilde H$ is para-$\tilde G$ and  $H$ is para-$G$, by Theorem 3.7.   Since $M'=M\cap N'=M\cap Z(N),$ by Lemma 2.5, $M'=Z(M)$ and   $$\tilde G= A\rtimes T = MN'/N' \rtimes T =  M/M'\rtimes  T  =M/Z(M)\rtimes T=G/Z(M).$$   Hence, by Proposition 2.6,  
  if   $\tilde G\not\cong \tilde H,$  then  $G\not\cong H.$  $\square$
\end{proof}

\section {Pronilpotent Completions} 
\par The aim in this section is to extend the Baumslag, Mikhailov and Orr result, Theorem 8.1 \cite{bmo2}, that the pro-nilpotent completion of  a residually nilpotent  metabelian polycyclic group is locally polycyclic  to all   groups in ${\cal P}^*.$ Given any group $G$,     the pro-nilpotent completion of $G$, $\hat G_{nil},$ is the inverse limit of the lower central quotient groups of $G:$ $$\hat G_{nil}= \lim_{\leftarrow } G/\gamma_i (G).$$ Some obvious facts are:
\bi\item[(i)] if $G$ is a residually nilpotent group, then $G$ embeds in $\hat G_{nil}$; \item[(ii)] if $G$ is soluble of derived length $d$, then $\hat G_{nil}$ is also soluble of derived length $d$; \item[(iii)] if $H$ is para-$G$, then $\hat G_{nil}\cong \hat H_{nil}.$ \ei
Recall that a group $G$ is said to be HNN-free if it has no subgroups that are nontrivial HNN-extensions
\begin{lemma} Let $G$ be a finitely generated soluble group then the following properties are equivalent:
\bi \item[(a)] $G$ is polycyclic; \item[(b)]   $<x>^{<g>}$ is finitely generated for all $x , \,g \in G$;   \item[(c)] $G$ is HNN-free.\ei
\end{lemma}
\begin{proof} (a) $\Leftrightarrow$ (b) by 4.6.4 \cite{lr}.
\vq (b) $\Leftrightarrow$ (c) Lemma 1 \cite{rrv}  $\square$
\end{proof}

\begin{lemma} Let $G$ be a group, let $N\triangleleft G$ and set
$Q=G/N.$ If $Q$ is HNN-free, then the following properties are equivalent:
\bi \item[(a)] $G$ is HNN-free;
\item[(b)] if $g\in G$ and $H^g\le H\le N,$ then $H=H^g;$
\item[(c)] $<x>^{<g>}$ is finitely generated for all $x\in N,\,g \in G.$
\ei

\end{lemma}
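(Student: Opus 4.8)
The plan is to collapse the three conditions onto a single combinatorial statement about $G$: the \emph{absence of a strictly ascending conjugation}, i.e.\ there is no pair $H\le G$, $g\in G$ with $H^g\le H$ and $H^g\neq H$. The theory of HNN extensions (the engine behind Lemma~1 of \cite{rrv} invoked in Lemma~4.1) supplies the dictionary both ways: a strictly ascending conjugation $H^g\lneq H$ produces, inside $\langle H,g\rangle$, an ascending HNN extension of $H$ along the injective endomorphism $h\mapsto g^{-1}hg$; conversely a nontrivial HNN subgroup always exhibits a strictly ascending conjugation (immediately when the extension is ascending, and through the free subgroup furnished by Britton's Lemma when it is not). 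Thus a group $W$ is HNN-free if and only if $H^w\le H$ forces $H^w=H$ for all $H\le W$, $w\in W$. Read through this dictionary, (a) says there is no strictly ascending conjugation anywhere in $G$, while (b) says there is none whose base lies in $N$.

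Granting the dictionary, (a)$\Rightarrow$(b) is immediate, being a mere restriction of the quantifier to subgroups $H\le N$. The substance is (b)$\Rightarrow$(a), which I would prove by showing that any $H\le G$ and $g\in G$ with $H^g\le H$ satisfy $H^g=H$; the idea is to treat separately the part of $H$ inside $N$ and its image in $Q$. On the first layer, $(H\cap N)^g=H^g\cap N\le H\cap N$ with $H\cap N\le N$, so (b) gives $(H\cap N)^g=H\cap N$. On the second layer, $\bar H:=HN/N\le Q$ satisfies $\bar H^{\bar g}=H^gN/N\le\bar H$, so the dictionary applied to the HNN-free group $Q$ gives $\bar H^{\bar g}=\bar H$. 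A short chase then yields $H\le H^g$: given $h\in H$, its image in $Q$ equals $\overline{g^{-1}kg}$ for some $k\in H$, so $hw^{-1}\in H\cap N$ where $w=g^{-1}kg\in H^g\le H$; since $H\cap N$ is $g$-invariant, $hw^{-1}=g^{-1}cg$ for some $c\in H\cap N$, whence $h=g^{-1}(ck)g\in H^g$. Hence $H^g=H$, and (a) follows.

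It remains to establish (b)$\Leftrightarrow$(c); this is internal to $N$ and uses neither the dictionary nor the hypothesis on $Q$. For (b)$\Rightarrow$(c), fix $x\in N$, $g\in G$ and apply (b) to $H_+=\langle x^{g^n}\mid n\ge 0\rangle$ and, with the element $g^{-1}$ (also permitted in (b)), to $H_-=\langle x^{g^n}\mid n\le 0\rangle$; since $H_+^{g}\le H_+$ and $H_-^{g^{-1}}\le H_-$, both give equalities, forcing $x\in\langle x^{g^n}\mid n\ge 1\rangle$ and $x\in\langle x^{g^n}\mid n\le -1\rangle$. As each membership is witnessed by a finite word, conjugating repeatedly confines all the $x^{g^n}$ to a fixed finite window of generators, so $\langle x\rangle^{\langle g\rangle}=\langle x^{g^n}\mid n\in\Z\rangle$ is finitely generated. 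For the contrapositive of (c)$\Rightarrow$(b), suppose $H^g\lneq H\le N$ and pick $x\in H\setminus H^g$; were $\langle x\rangle^{\langle g\rangle}$ finitely generated it would equal $\langle x^{g^n}\mid |n|\le m\rangle$ for some $m$, and applying the homomorphism $y\mapsto y^{g^{m+1}}$ to the resulting expression for $x^{g^{-m-1}}$ would place $x$ in $\langle x^{g^n}\mid 1\le n\le 2m+1\rangle\le H^g$ (using $H^{g^n}\le H^g$ for $n\ge 1$), contradicting the choice of $x$.

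The main obstacle is the dictionary of the first paragraph, and specifically its backward direction, that a strictly ascending conjugation genuinely yields a nontrivial HNN subgroup; this is the one real appeal to HNN/Bass--Serre machinery, and for the polycyclic groups ultimately of interest it is precisely the input borrowed from \cite{rrv} in Lemma~4.1. The only other delicate point is the surjectivity step in the layer argument of (b)$\Rightarrow$(a), where the $g$-invariance of $H\cap N$ and the $\bar g$-invariance of $\bar H$ must be used in tandem rather than separately.
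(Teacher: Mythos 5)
Your proof is correct and follows essentially the same route as the paper: both rest on the characterization of HNN-freeness from Lemma 1 of \cite{rrv} (your ``dictionary''), both handle (b)$\Rightarrow$(a) by splitting into the two layers $H\cap N$ and $HN/N$ (your chase just fills in the detail behind the paper's one-line claim that $K^g=K$ iff $(K\cap N)^g=K\cap N$), and your (c)$\Rightarrow$(b) is the contrapositive of the paper's identical conjugation-window computation. The only difference is organizational: you prove (b)$\Rightarrow$(c) directly, whereas the paper obtains (a)$\Rightarrow$(c) from \cite{rrv} and closes the cycle via (c)$\Rightarrow$(b)$\Rightarrow$(a).
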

\begin{proof} (a)$\Rightarrow $ (b) and (c), by Lemma 1 \cite{rrv}.

\vq (b) $\Rightarrow$ (a) By Lemma 1, \cite{rrv} $G$ is HNN-free if, and only if,   $K^g\le K\le G$ implies that $K^g=K.$  
Since $Q$ is HNN-free, $K^g=K$ if, and only if, $(K\cap N)^g=K\cap N .$
\vq (c) $\Rightarrow$ (b) Suppose $H^g\le H\le N,$ for some $g \in G.$ If $x \in H,$ then  $<x>^{<g>}$ is finitely generated and hence there exists $n>0$ such that  $$<x>^{<g>}=\left< x,x^{g^{-1}}, \dots ,x^{g^{-n}}\right>.$$ Therefore $x^{g^{-n-1}}\in H^{g^{-n}}$ and $x \in H^g.$ $\square$ 
\end{proof}

\begin{lemma} Let $G$ be  polycyclic, $A \triangleleft G$ be abelian and   $Q=G/A,$ we view as $A$ as a $\Z Q$-module. Then for each $t=xA\in Q$, there exists polynomials $\alpha,  \beta$ $$\ba{l}  \alpha (t)=c_0 +c_1 t+ \dots+c_{m-1}t^{m-1}\pm t^m,\\ \\ \beta(t)=  d_0 +d_1 t+ \dots+d_{n-1}t^{n-1} \pm t^n \ea$$
such that $$a\alpha(t)=a \beta (t^{-1})  =0,\;\;{\rm for \; all \;} a\in A.$$ Furthermore $<a>^{<x>}$ is finitely generated by  $m+n-1$ conjugates of $a$ by powers of $x.$ 
 \end{lemma}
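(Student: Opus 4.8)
The plan is to exploit that $A$, being a normal subgroup of the polycyclic group $G$, is itself polycyclic, and since it is abelian it is a finitely generated $\Z$-module, say on $d$ generators $a_1,\dots,a_d$. The action of $t=xA$ on the module $A$ is conjugation by $x$, which is an automorphism $\phi$ of $A$; because $A\triangleleft G$, conjugation by $x^{-1}$ likewise maps $A$ into $A$, giving the inverse automorphism $\phi^{-1}$, which realises the action of $t^{-1}$.

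First I would construct $\alpha$ by the determinant trick. Writing $\phi(a_i)=\sum_j m_{ji}a_j$ with integer matrix $M=(m_{ji})$, Cayley--Hamilton over the commutative ring $\Z$ (via multiplication by the adjugate of $tI-M$) shows that $\phi$ annihilates its characteristic polynomial $\chi(t)=\det(tI-M)$, which is monic of degree $d$ with integer coefficients. Thus $a\chi(t)=0$ for every $a\in A$, and I may set $\alpha=\chi$, so that $m=d$ and the leading term is $+t^{m}$. Applying the identical argument to the integer matrix of $\phi^{-1}$ produces a monic integral polynomial of degree $d$ annihilating $\phi^{-1}$, which is exactly the required $\beta$ in the variable $t^{-1}$, with $n=d$; the leading terms $\pm t^{m},\pm t^{-n}$ then have the stated shape.

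For the final assertion I would use that, $A$ being abelian, the normal closure $<a>^{<x>}$ is just the subgroup generated by all conjugates $a^{x^{k}}=a t^{k}$, $k\in\Z$. The relation $a\alpha(t)=0$ rewrites $a t^{m}$ as a $\Z$-combination of $a,a t,\dots,a t^{m-1}$; applying $t$ repeatedly and inducting shows that every $a t^{k}$ with $k\ge 0$ already lies in the group generated by $a,a t,\dots,a t^{m-1}$. Symmetrically, $a\beta(t^{-1})=0$ reduces every $a t^{-k}$ with $k\ge n$ into the group generated by $a,a t^{-1},\dots,a t^{-(n-1)}$. Hence every conjugate lies in the subgroup generated by the $m+n-1$ elements $a t^{-(n-1)},\dots,a t^{-1},a,a t,\dots,a t^{m-1}$, which is the claimed bound.

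The real content — and the only genuine obstacle — is the opening observation that $A$ is finitely generated as an abelian group, so that the determinant trick is available over $\Z$, together with the remark that normality of $A$ forces both $\phi$ and $\phi^{-1}$ to be integral endomorphisms; once these are secured, the two monic relations and the elementary reduction of powers finish the argument with no further difficulty.
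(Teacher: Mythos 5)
Your argument is correct, and it reaches the same conclusion by a genuinely different route in its first half. The paper obtains the uniform polynomials $\alpha,\beta$ indirectly: it first invokes Lemma 8.3 of \cite{bmo2} to produce, for each single element $a\in A$, monic polynomials $\alpha_a,\beta_a$ with $a\alpha_a(t)=a\beta_a(t^{-1})=0$, and then, using that $A$ is finitely generated as an abelian group and that polynomials in the one variable $t$ commute, takes $\alpha=\alpha_{a_1}\cdots\alpha_{a_n}$ and $\beta=\beta_{a_1}\cdots\beta_{a_n}$ so that the product annihilates every generator and hence all of $A$. You instead apply the determinant trick (Cayley--Hamilton for a finitely generated $\Z$-module) directly to the integer matrices of $\phi$ and $\phi^{-1}$, which is valid even when the chosen generators of $A$ are not a free basis, and which yields the uniform annihilating polynomials in one step. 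Your approach is more self-contained (it does not lean on the element-wise result from \cite{bmo2}) and gives explicit degrees $m=n=d$, the number of generators of $A$, whereas the paper's product construction typically produces polynomials of larger degree but keeps the exposition short by reusing the earlier lemma. The concluding step --- rewriting $a\alpha(t)=0$ and $a\beta(t^{-1})=0$ to reduce all positive and negative conjugates $a^{x^k}$ into the span of $a^{x^{-(n-1)}},\dots,a,\dots,a^{x^{m-1}}$, giving $m+n-1$ generators for $<a>^{<x>}$ --- is identical in both treatments.
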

\begin{proof} As in Lemma 8.3 \cite{bmo2}, given any $a\in A$ we can find $\alpha_a,\,\beta_a,$ such that $a\alpha_a (t)=a \beta_a (t^{-1})  =0.$
 Therefore   $$a(\alpha_a(t)t^j)=(a\alpha_a(t))t^j=0 =(a  \beta_a (t^{-1}))t^{-j} =a(\beta_a(t^{-1})t^{-j}) , \;\; \forall j \ge 0.$$ 
 As $G$ is polycyclic, $A$ is finitely generated as an abelian group, say  $A=<a_1,\dots ,a_n>.$ Let $$\alpha=\alpha_{a_1}\dots \alpha_{a_n},\;\beta= \beta_{a_1}\dots \beta_{a_n}.$$ Then $a_i\alpha(t)=a_i \beta(t^{-1}) =0$, for all $1\le i\le n$, and hence $a\alpha(t)=a \beta(t^{-1})  =0,\;\;{\rm for \; all \;} a\in A.$ Note  we can rewrite $a\alpha (t)=0=a \beta( t^{-1}) $ using group notation as $$\ba{ll} \ds 1=a^{\alpha(x)}=a^{c_0 +c_1 x+ \dots+c_{m-1}x^{m-1}\pm x^m}=a^{c_0}a^{c_1 x} \dots a^{c_{m-1}x^{m-1}}a^{\pm x^m} \\ \\ \ds 1=a^{\beta(x^{-1})}=a^{d_0 +d_1 x^{-1}+ \dots+d_{n-1}x^{-(n-1)}\pm x^{-n}}=a^{d_0}a^{d_1 x} \dots a^{d_{n-1}x^{n-1}}a^{\pm x^{-n}} \ea$$  and so  $$<a>^{<x>}=<a^{x^{-(n-1)}},\dots ,a^{x^{-1}}, a,a^x,\dots,a^{x^{m-1}}> .$$$\square$ \end{proof}

\begin{theorem} If     $G\in {\cal P}^*$, then $\hat G_{nil}$ is locally polycyclic.

\end{theorem}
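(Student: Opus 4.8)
The plan is to prove that $\hat G_{nil}$ is locally polycyclic by induction on the derived length $d$ of $G$, carried out in the larger class of all polycyclic nilpotent-by-abelian groups; this class (unlike ${\cal P}^*$) is closed under quotients, which is what makes the induction work, and it contains $G$. Write $\Gamma=\hat G_{nil}$. By observation (ii) above, $\Gamma$ is soluble of derived length $d$, so by Lemma 4.1 a finitely generated subgroup of $\Gamma$ is polycyclic if and only if it is HNN-free; since HNN-freeness passes to subgroups it therefore suffices to show that $\Gamma$ itself is HNN-free. When $d\le 1$ the group $G$ is finitely generated abelian, $\gamma_i(G)=1$ for $i\ge 2$, and $\Gamma=G$ is polycyclic.

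For the inductive step let $A=G^{(d-1)}$, the last nontrivial term of the derived series, an abelian normal subgroup of $G$, and let $\bar A$ be the closure of its image in $\Gamma$. Since the commutator map on $\Gamma$ is continuous, $\bar A$ is an abelian normal subgroup. The projection $G\to G/A$ induces a surjection $\Gamma\to\widehat{(G/A)}_{nil}$ with kernel $\bar A$, so $Q:=\Gamma/\bar A\cong\widehat{(G/A)}_{nil}$; as $G/A$ is polycyclic nilpotent-by-abelian of derived length $d-1$, the inductive hypothesis gives that $Q$ is locally polycyclic. A locally polycyclic soluble group is HNN-free: if $K^g\le K\le Q$ then for each $x\in K$ the subgroup $\langle x\rangle^{\langle g\rangle}\le\langle x,g\rangle$ is polycyclic, hence finitely generated, which forces $x\in K^g$ exactly as in the proof of Lemma 4.2. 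I would then invoke Lemma 4.2 with $N=\bar A$: since $Q$ is HNN-free, $\Gamma$ is HNN-free provided condition (c) holds, namely that $\langle\xi\rangle^{\langle g\rangle}$ is finitely generated for every $\xi\in\bar A$ and $g\in\Gamma$.

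To verify this I would use Lemma 4.3. Conjugation by $g$ acts on the abelian group $\bar A$ as the module action of $t=g\bar A\in Q$. Lemma 4.3, applied to the polycyclic group $G$ with abelian normal subgroup $A$, produces polynomials $\alpha(t)=c_0+\dots\pm t^m$ and $\beta(t^{-1})=d_0+\dots\pm t^{-n}$ with unimodular leading term such that $a\,\alpha(t)=a\,\beta(t^{-1})=0$ for all $a\in A$; in group notation these are relations expressing $\xi^{g^{m}}$ and $\xi^{g^{-n}}$ as words in the intermediate conjugates $\xi^{g^{-(n-1)}},\dots,\xi^{g^{m-1}}$. Because the leading coefficients are $\pm1$ these are closed conditions: the maps sending $\xi$ to the corresponding words are continuous and vanish on the dense subgroup $A$, hence on all of $\bar A$. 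Consequently $\langle\xi\rangle^{\langle g\rangle}$ is generated by the $m+n-1$ conjugates $\xi^{g^{-(n-1)}},\dots,\xi^{g^{m-1}}$ and is finitely generated, establishing condition (c). Thus $\Gamma$ is HNN-free, every finitely generated subgroup is polycyclic by Lemma 4.1, and the induction closes.

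The main obstacle is precisely this last step: showing that the monic annihilator relations of Lemma 4.3, which a priori hold only for the action of elements of $G/A$ on $A$, persist for the action of an arbitrary $g\in\Gamma$ on the closure $\bar A$. Two points must be nailed down: (a) that the $\Z Q$-module $\bar A$ is governed by integer polynomial relations inherited from the finite rank of the abelian group $A$—here the unimodular leading coefficient is essential, since it is what keeps arbitrarily high conjugates inside the span of finitely many low ones in the inverse limit—and (b) that an element $g\in\Gamma$ not coming from $G$ still satisfies a two-sided monic relation on $\bar A$, which I would obtain from a Cayley--Hamilton/continuity argument applied to the action of $Q$ on the finite-rank module $\bar A$. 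Everything else is the formal bookkeeping of Lemmas 4.1 and 4.2.
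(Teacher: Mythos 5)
There is a genuine gap, and it sits exactly where you yourself flag it: the claim that an arbitrary $g\in\hat G_{nil}$ satisfies a two-sided monic integral relation on $\bar A$. Lemma 4.3 produces the polynomials $\alpha,\beta$ only for $t=xA$ with $x\in G$; for $g$ outside $G$ it says nothing, and your proposed repair via ``Cayley--Hamilton on the finite-rank module $\bar A$'' does not work because $\bar A=\tilde A=\lim_{\leftarrow}A\gamma_i(G)/\gamma_i(G)$ is \emph{not} of finite rank as an abelian group (it is typically an uncountable inverse limit --- if it were finite rank the theorem would be nearly vacuous), and the elements of $Q=\hat G_{nil}/\bar A$ acting on it do not come equipped with integral monic annihilators. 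The continuity argument is likewise applied to the wrong statement: the identity $a^{\alpha(x_1)}=1$ holds on $A$ for $x_1\in G$, but what you need on the dense subgroup is $a^{\alpha(g)}=1$ for the completion element $g$ itself, and nothing in your setup makes the two actions agree. The paper closes exactly this gap by a different choice of abelian normal subgroup: it takes $A=Z(G')$ rather than $A=G^{(d-1)}$, and inducts on Hirsch length inside ${\cal P}^*$ (Lemma 2.4 keeps the quotients residually nilpotent). Writing $x\in\hat G_{nil}$ in coordinates $x(n)=x_1x_2\cdots x_n\gamma_{n+1}(G)$ with $x_j\in\gamma_j(G)$, every coordinate $x_j$ with $j\ge 2$ lies in $G'$ and hence centralises $A=Z(G')$; consequently conjugation by $x$ on $\tilde A$ coincides, level by level, with conjugation by the genuine group element $x_1\in G$. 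That reduction to an element of $G$ is what licenses the appeal to Lemma 4.3, and it is unavailable for $A=G^{(d-1)}$, whose centraliser need not contain $\gamma_2(G)$.

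Your outer scaffolding (reduce to HNN-freeness via Lemmas 4.1 and 4.2, handle $\Gamma/\bar A$ by induction, verify condition (c) on the abelian kernel) matches the paper's, and the observation that a surjection $\Gamma\to\hat{(G/A)}_{nil}$ with kernel $\bar A$ exists is correct. But the induction on derived length through the class of all polycyclic nilpotent-by-abelian groups, with the last derived term as the abelian kernel, cannot be completed as described; you would need to either adopt the paper's $Z(G')$ device or supply a genuinely new argument for why arbitrary elements of the completion act on $\bar A$ with monic integral minimal polynomials.
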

\begin{proof} Let    $A=Z(G')$ and  $Q=G/A.$  Then, by Lemma 2.4 , $Q$ is residually nilpotent and hence $Q\in {\cal P}^*$. Note if $h(A)=0, $ then $A$ and hence $G'$ are finite. As $G$ is residually nilpotent, this would in fact imply that $G$ is nilpotent. The result  would  follow  trivially  so we shall assume that $h(A)\not= 0$ and hence $h(Q)<h(G).$    Set  $$ \ba{l} A_i =A \gamma_i (G)/ \gamma_i (G),\; G_i=G/\gamma_i(G),\;Q=Q/\gamma_i(Q), \\ \ds  \tilde A= \lim_{\leftarrow } A_i,\; \hat G_{nil}= \lim_{\leftarrow } G_i,\; 
 \hat Q_{nil}= \lim_{\leftarrow } Q_i,\ea $$   then $\tilde A$ embeds into $\hat G_{nil},$ we will now view it as a subgroup, and $\hat G_{nil}/\tilde A$ embeds into $\hat Q_{nil}$. We will use induction on the Hirsch length, $h(G)$, of $G$. By the remarks above we are assuming that $h(Q)<h(G).$ If $h(G)=1,$ then $Q$ is finite and so is nilpotent. Therefore $\hat Q_{nil} =Q$ and   $\hat G_{nil}$ is  an abelian-by-finite group and hence locally polycyclic.   Suppose $h(G)>1,$  as  $Q\in {\cal P}^*$,  $Q$ satisfies the hypothesis and 
 $h(Q)<h(G),$  we can assume that $\hat Q_{nil}$ is locally polycyclic. By Lemma 4.1 and Lemma 4.2, $\hat G_{nil}$ is locally polycyclic, if, and only if, $<a>^{<x>}$ is finitely generated, for all $a \in \tilde A,\; x\in \hat G_{nil}.$ We follow the notation of Lemma 8.5, \cite{bmo2}. Given  $a \in \tilde A,\; x\in \hat G_{nil},$ let $$ a(n)=a_{1}a_{2}\dots a_n  \gamma_{n+1}(G ),\;\;x(n)=x_1x_2\dots x_n\gamma_{n+1} (G) $$  where $a_j \in A\cap \gamma_j (G) $ and  $x_j\in \gamma_j (G).$   Then $$(a^x)(n)=a(n)^{x(n)}=(a_1\dots a_n)^{x_1\dots x_{n}}\gamma_{n+1} (G) =(a_1\dots a_n)^{x_1}\gamma_{n+1} (G) .$$    Set $t=x_1A\in Q, $ then, by Lemma 4.3, there exists $\alpha$ and $\beta$ such that  $$b\alpha(t)=b \beta(t^{-1}) =0,\;\;{\rm for \; all \;} b\in A.$$  Therefore $$\ba{l  }  \ds  (a^{\alpha(x)})(n)= a(n)^{\alpha(x(n))}=(a_1\dots a_n)^{\alpha(x_1)}\gamma_{n+1} (G) =1\gamma_{n+1}(G) ,\\  \\(a^{\beta(x^{-1})})(n)=a(n)^{ \beta(x(n)^{-1} )} =(a_1\dots a_n)^{ \beta(x_1^{-1})}\gamma_{n+1} (G)  =1\gamma_{n+1}(G) ,\;\;\forall n\in \N\\  \\ \Longrightarrow   a^{\alpha(x)}=a^{ \beta (x^{-1}) } =1.\ea$$
 Hence $<a>^{<x>}$ is finitely generated and    $\hat G_{nil}$ is locally polycyclic.  $\square$

    \end{proof}

\printbibliography 

\vq \textsc{Niamh O'Sullivan, School of Mathematical Sciences, Dublin City University,  Dublin 9, Ireland.}
\vq\textit{Email Address:} \texttt{niamh.osullivan@dcu.ie}		
\end{document}